\renewcommand\section{\@startsection{section}{1}{\z@}%
                       {-3\p@ \@plus -4\p@ \@minus -4\p@}%
                       {3\p@ \@plus 4\p@ \@minus 4\p@}%
                      {\normalfont\normalsize\centering\scshape}}
\author{Lashi Bandara}
\title[Cont. of solns. to space-varying pointwise lin. ell. eqns.] {Continuity of solutions to space-varying pointwise linear elliptic equations}
\date{\today}
\address{Lashi Bandara, Mathematical Sciences,
Chalmers University of Technology and University of Gothenburg, SE-412 96, Gothenburg, Sweden}
\urladdr{\href{http://www.math.chalmers.se/~lashitha}{http://www.math.chalmers.se/~lashitha}}
\email{\href{mailto:lashi.bandara@chalmers.se}{lashi.bandara@chalmers.se}}
\subjclass[2010]{58J05, 58J60, 47J35, 58D25}
\keywords{Continuity equation, rough metrics, homogeneous Kato square root problem}
\def\colour{\colour}
\def\colour{\color}
\newtheorem{theorem}{Theorem}[section]
\newtheorem{corollary}[theorem]{Corollary}
\newtheorem{lemma}[theorem]{Lemma}
\newtheorem{proposition}[theorem]{Proposition}
\newtheorem{remark}[theorem]{Remark}
\newcommand{\mdot}{\cdotp}
\newcommand{\cbrac}[1]{\left(#1\right)}
\newcommand{\dbrac}[1]{\left\{#1\right\}}
\newcommand{\modulus}[1]{|#1|}
\newcommand{\set}[1]{\dbrac{#1}}
\newcommand{\dom}{ {\mathcal{D}}}
\newcommand{\ran}{ {\mathcal{R}}}
\newcommand{\nul}{ {\mathcal{N}}}
\newcommand{\R}{\mathbb{R}}
\newcommand{\C}{\mathbb{C}}
\newcommand{\script}[1]{\mathscr{#1}}
\DeclareMathOperator{\re}{Re}			
\renewcommand{\emptyset}{\varnothing}
\newcommand{\union}{\cup}
\newcommand{\intersect}{\cap}
\newcommand{\rest}[1]{{{\lvert_{}}_{}}_{#1}}
\newcommand{\close}[1]{\overline{#1}}		
\renewcommand{\epsilon}{\varepsilon}
\renewcommand{\phi}{\varphi}
\newcommand{\norm}[1]{\| #1 \|}			
\DeclareMathOperator{\divv}{div}		
\newcommand{\Ric}{{\rm Ric}}			
\DeclareMathOperator{\inj}{inj} 		
\newcommand{\Tensors}[1][{}]{{\mathcal{T}}^{(#1)}}	
\DeclareMathOperator{\proj}{\mathbf{P}}	
\newcommand{\tanb}{{\rm T}}		
\newcommand{\cotanb}{{\rm T}^\ast}	
\newcommand{\pushf}[1]{{#1}_\ast}			
\newcommand{\pullb}[1]{{#1}^\ast}			
\DeclareFontFamily{OT1}{restrictfont}{}
\DeclareFontShape{OT1}{restrictfont}{m}{n}{<-> fmvr8x}{}
\newcommand{\adj}[1]{{#1}^\ast}			
\newcommand{\extd}{{\rm d}}			
\newcommand{\inprod}[1]{\langle #1 \rangle}	
\newcommand{\grad}{\nabla}			
\newcommand{\conn}[1][{}]{{\grad_{{#1}}}}		
\newcommand{\Leb}[1][{}]{\script{L}^{#1}}			
\newcommand{\spec}{\sigma}		
\newcommand{\rset}{\rho}				
\newcommand{\Lp}[2][{}]{{\rm L}^{#2}_{\rm #1}}		
\newcommand{\Ck}[2][{}]{{\rm C}^{#2}_{\rm #1}}		
\newcommand{\Sob}[2][{}]{{\rm W}^{#2}_{\rm #1}}		
\newcommand{\Sec}[1]{\mathrm{S}_{#1}}
\newcommand{\OSec}[1]{\mathrm{S}^\mathrm{o}_{#1}}
\newcommand{\iden}{{\mathrm{I}}}
\DeclareMathOperator{\sgn}{sgn}
\newcommand{\Hil}{\script{H}}			
\newcommand{\Lap}{\Delta}			
\newcommand{\sE}{\script{E}}
\newcommand{\cM}{\mathcal{M}} 
\newcommand{\cN}{\mathcal{N}}
\newcommand{\cS}{\mathcal{S}}
\newcommand{\A}{A}
\newcommand{\Hol}{{\rm Hol}}
\newcommand{\mg}{\mathrm{g}}
\newcommand{\mgt}{{\tilde{\mg}}}
\newcommand{\Poincare}{Poincar\'e~}		
\newcommand{\Div}{\mathrm{L}}
\newcommand{\B}{\mathrm{B}}
\newcommand{\RCD}{\mathrm{RCD}}
\newcommand{\hk}{\uprho}
\newcommand{\RNum}[1]{\uppercase\expandafter{\romannumeral #1\relax}}
\begin{document}

\maketitle

\begin{abstract}
We consider pointwise linear elliptic 
equations of the form $\Div_x u_x = \eta_x$
on a smooth compact manifold
where the operators $\Div_x$ are in divergence
form with real, bounded, measurable
coefficients that vary in the space variable $x$. 
We establish $\Lp{2}$-continuity
of the solutions at $x$ whenever
the coefficients of $\Div_x$ are 
$\Lp{\infty}$-continuous at $x$ 
and the initial datum is $\Lp{2}$-continuous
at $x$. This is obtained
by reducing the continuity of solutions 
to a homogeneous Kato square root 
problem. 
As an application,
we consider a time evolving
family of metrics $\mg_t$
that is tangential to 
the Ricci flow almost-everywhere along
geodesics when 
starting with a smooth 
initial metric.
Under the assumption that our
initial metric is a rough metric on $\cM$ with a
$\Ck{1}$ heat kernel on a 
``non-singular'' nonempty open subset $\cN$,
we show that $x \mapsto \mg_t(x)$
is continuous whenever $x \in \cN$.
\end{abstract}
\vspace*{-0.5em}
\tableofcontents
\vspace*{-2em}

\parindent0cm
\setlength{\parskip}{\baselineskip}

\section{Introduction}

The object of this paper is to consider the continuity
of solutions to certain linear elliptic partial 
differential equations, where the differential operators
themselves vary from point to point.
To fix our setting, let $\cM$ be a smooth compact Riemannian
manifold, and $\mg$ a smooth metric.
Near some point $x_0 \in \cM$, we fix
an open set $U_0$ containing $x_0$.
We assume that  
$U_0 \ni x\mapsto \Div_x$,
are space-varying elliptic, second-order divergence
form operators
with real, bounded, measurable coefficients.
The equation at the centre of
our study is the following \emph{pointwise} linear 
problem
\begin{equation*}
\tag{PE}
\label{Def:PE}  
\Div_x u_x = \eta_x
\end{equation*}
for suitable source data $\eta_x \in \Lp{2}(\cM)$.
Our goal is to establish the continuity of
solutions $x \mapsto u_x$ (in $\Lp{2}(\cM)$)
under sufficiently general 
hypotheses on $x \mapsto \Div_x$ and
$x \mapsto \eta_x$.
 
There are abundant equations of the form \eqref{Def:PE} 
that arise naturally.
An important and large class of such equations
arise as \emph{continuity equations}. 
These equations are typically of the form
\begin{equation*}
\label{Def:CE}
\tag{CE}
-\divv_{\mg,y} f_{x}(y) \conn u_{x,v}(y) = \extd_x (f_{x}(y))(v),
\end{equation*}
where $\gamma:I \mapsto \cM$ is a
smooth curve, $\gamma(0) = x$ and $\dot{\gamma}(0) = v$, 
and where this equation holds in a suitable
weak sense in $y$.
These equations play an important role
in geometry, and more recently
in mass transport and the geometry
of measure metric spaces. 
See the book \cite{Villani} by Villani, the paper \cite{AT} by Ambrosio
and Trevisan, and references within.

The operators $\Div_x$ have 
the added complication 
that their domain  may 
vary as the point $x$ varies. 
That being said, 
a redeeming quality 
is that they facilitate a certain 
\emph{disintegration}. 
That is, considerations in $x$
(such as continuity and differentiability), 
can be obtained via weak solutions in $y$.
This structural feature facilitates attack by
techniques from operator theory and
harmonic analysis as we demonstrate in this paper.

A very particular instance of
the continuity equation
that has been a core motivation
is where, in the equation \eqref{Def:CE},
the term $f_x(y) = \hk^\mg_t(x,y)$, the
heat kernel associated to the Laplacian $\Lap_\mg$. 
In this situation, Gigli and Mantegazza in \cite{GM} 
define a metric tensor 
$\mg_t(x)(v,u) = \inprod{\Div_x u_{x,v}, u_{x,u}}$
for vectors $u, v \in \tanb_x \cM$.
The regularity of the metric is then regularity in $x$,
and for an initial smooth metric,
the aforementioned authors show that this
evolving family of metrics 
are smooth.
More interestingly, they demonstrate that 
$$\partial_t \mg_t(\dot{\gamma}(s),\dot{\gamma}(s))\rest{t = 0} = -2 \Ric_{\mg}(\dot{\gamma}(s),\dot{\gamma}(s)),$$ 
for almost-every $s$ along geodesics $\gamma$.
That is, this flow $\mg_t$ is 
\emph{tangential} to the Ricci flow 
almost-everywhere along geodesics.

In \cite{BLM},  
Bandara, Lakzian and Munn study
a generalisation of this flow 
by considering divergence form elliptic equations with 
bounded measurable coefficients.
They obtain regularity properties
for $\mg_t$ when the heat kernel is Lipschitz
and improves to a $\Ck{k}$ map ($k \geq 2$)
on some non-empty open set in the manifold.
Their study was motivated by attempting to 
describe the evolution of 
geometric conical singularities
as well as other singular spaces.
As an application
we return to this work 
and consider the case when $k = 1$. 

To describe the main theorem of
this paper, let us 
give an account of some useful
terminology. We assume that $\Div_x$ are
defined through a space-varying 
symmetric form $J_x[u,v] = \inprod{A_x \conn u, \conn v}$,
where each $A_x$ is a bounded, measurable, 
symmetric $(1,1)$ tensor field
which is elliptic at $x$: there exist
$\kappa_x > 0$ such that $J_x[u,u] \geq \kappa_x \norm{\conn u}^2$.
Next, let us be precise about the notion of $\Lp{p}$-continuity.
We say that $x\mapsto u_x$ is $\Lp{p}$-continuous 
if, given an $\epsilon > 0$,
there exists an open set $V_{x,\epsilon}$ containing 
$x$ such that, whenever $y \in V_{x,\epsilon}$,
we have that $\norm{u_y - u_x}_{\Lp{p}} < \epsilon$.
With this in mind, we showcase our main theorem.

\begin{theorem}
\label{Thm:Main}
Let $\cM$ be a smooth manifold and $\mg$ a smooth 
metric.
At $x \in \cM$ suppose that $x \mapsto A_x$ 
are real, symmetric, elliptic, bounded 
measurable coefficients that are $\Lp{\infty}$-continuous at $x$,
and that  $x \mapsto \eta_x$ is $\Lp{2}$-continuous at $x$.
If $x \mapsto u_x$ 
solves \eqref{Def:PE} at $x$, 
then $x \mapsto u_x$ is $\Lp{2}$-continuous at $x$.
\end{theorem}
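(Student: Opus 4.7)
The plan is to reduce the continuity of $x \mapsto u_x$ to an $\Lp{2}$-boundedness estimate for $\Div_y^{-1}$ acting on divergence-form data, which is exactly what the homogeneous Kato square root theorem supplies. Fix $x$ and consider $y$ near $x$. Subtracting \eqref{Def:PE} at $y$ and at $x$ yields
\[
\Div_y(u_y - u_x) = (\eta_y - \eta_x) + (\Div_x - \Div_y)u_x = (\eta_y - \eta_x) + \divv\cbrac{(A_y - A_x)\conn u_x}.
\]
The essential structural observation is that the perturbation $(\Div_x - \Div_y)u_x$ lives in divergence form with ``vector potential'' $(A_y - A_x)\conn u_x$, whose $\Lp{2}$-norm is controlled by $\norm{A_y - A_x}_{\Lp{\infty}}\norm{\conn u_x}_{\Lp{2}}$. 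Since $u_x$ is held fixed, $\norm{\conn u_x}_{\Lp{2}}$ is a finite constant coming from the fact that $u_x$ belongs to the form domain of $\Div_x$.

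To invert $\Div_y$ I would work on the mean-zero subspace of $\Lp{2}(\cM)$, on which $\Div_y$ is a positive self-adjoint operator with a spectral gap (from ellipticity and compactness of $\cM$). Factoring $\Div_y^{-1} = \Div_y^{-1/2}\Div_y^{-1/2}$, the homogeneous Kato square root estimate $\norm{\Div_y^{1/2} v}_{\Lp{2}} \simeq \norm{\conn v}_{\Lp{2}}$ gives, via self-adjoint duality, that $\Div_y^{-1/2}\divv : \Lp{2} \to \Lp{2}$ is bounded; composing with $\Div_y^{-1/2}$ (bounded on $\Lp{2}$ by the spectral gap) yields
\[
\norm{u_y - u_x}_{\Lp{2}} \leq C_y \cbrac{\norm{\eta_y - \eta_x}_{\Lp{2}} + \norm{A_y - A_x}_{\Lp{\infty}} \norm{\conn u_x}_{\Lp{2}}}.
\]
Both right-hand terms vanish as $y \to x$ by the $\Lp{2}$-continuity of $\eta$ and the $\Lp{\infty}$-continuity of $A$ at $x$, provided $C_y$ remains bounded.

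The main obstacle is securing uniform control on $C_y$ as $y \to x$. The Kato constants depend quantitatively on the ellipticity constant $\kappa_y$ and the $\Lp{\infty}$-norm of $A_y$, so the $\Lp{\infty}$-continuity of $A$ at $x$ forces these parameters to stay within a compact range on a neighbourhood; what still needs checking is that the Kato estimate itself is stable under such perturbations with constants depending only on these parameters. I therefore expect the bulk of the technical effort to go into formulating a homogeneous Kato square root theorem in this geometric setting with \emph{explicit} qualitative dependence only on ellipticity and $\Lp{\infty}$-bounds, after which the argument above closes routinely. A secondary and largely bookkeeping point is that the kernel of $\Div_y$ consists of constants, so one should handle the mean-zero normalisation consistently across $y$; the solvability of \eqref{Def:PE} at each $y$ already imposes the requisite compatibility on $\eta_y$.
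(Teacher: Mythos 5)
Your proposal is correct, and it takes a route that is genuinely different from---and in one key respect simpler than---the paper's. The paper's strategy is to control the map $A \mapsto \sqrt{\Div_A}$: it proves a Lipschitz estimate $\norm{\sqrt{\Div_x}u - \sqrt{\Div_y}u} \lesssim \norm{A_x - A_y}_\infty\norm{\conn u}$ (Corollary \ref{Cor:Part}), and this is what drives the holomorphic functional calculus machinery in \S\ref{Sec:Kato}. That Lipschitz estimate only comes from embedding the real symmetric coefficients into a holomorphic family $B = A_x + z\tilde{A}$, $z \in \C$, and invoking stability of the bounded $\Hol^\infty$-calculus for the bi-sectorial operator $\Pi_B$; this is why the paper must solve the homogeneous Kato problem for complex, non-self-adjoint $B$. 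Your argument sidesteps the Lipschitz estimate entirely: you subtract the two equations to get
\[
\Div_y(u_y - u_x) = (\eta_y - \eta_x) + \divv\bigl((A_y - A_x)\conn u_x\bigr)
\]
(interpreted weakly, since $u_x$ need not lie in $\dom(\Div_y)$---this is a point worth flagging explicitly, and your factoring $\Div_y^{-1} = \Div_y^{-1/2}\Div_y^{-1/2}$ is exactly how one makes it rigorous), and then invert $\Div_y$ on the mean-zero subspace. Because the only operator you ever invert is the \emph{self-adjoint} $\Div_y$ with real symmetric coefficients, the ``Kato estimate'' $\norm{\Div_y^{1/2}v}\simeq\norm{\conn v}$ you need is not the deep theorem of \S\ref{Sec:Kato} but rather Kato's second representation theorem for symmetric forms: $\dom(\Div_y^{1/2})$ is the form domain $\Sob{1,2}(\cM)$ and $\norm{\Div_y^{1/2}v}^2 = J_y[v,v]$, with constants $\kappa_y$ and $\norm{A_y}_\infty$. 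This also settles the uniformity question you raise at the end---you do not need to re-examine the stability of any harmonic-analytic machinery, because the constants are manifestly explicit in the ellipticity data, which are locally uniformly controlled by the $\Lp{\infty}$-continuity of $A$ at $x$ (and $\norm{\Div_y^{-1/2}}$ on the mean-zero subspace is controlled by $\kappa_y$ together with the smooth-metric Poincar\'e constant). So the ``bulk of the technical effort'' you anticipate is not actually needed for your route: your approach proves \thmref{Thm:Main} using only classical form methods, whereas the paper's perturbative argument genuinely requires the non-self-adjoint Kato theorem. What the paper's approach buys in exchange is the quantitative Lipschitz modulus of continuity (Corollaries \ref{Cor:Part} and \ref{Cor:Diff}) and a homogeneous Kato theorem for complex $B$ of independent interest; your argument yields qualitative continuity only.
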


As aforementioned, a complication that 
arises in proving this 
theorem is that domains $\dom(\Div_x)$ may vary
with $x$. However, since the solutions
$x \mapsto u_x$ live at the level of the
resolvent of $\Div_x$, there
is hope to reduce this problem
to the difference of its square root, 
which incidentally has the fixed
domain $\Sob{1,2}(\cM)$.
As a means to this end, we make connections between 
the study of the $\Lp{2}$-continuity of these 
solutions to solving a 
\emph{homogeneous Kato square root problem}.

Let $B$ be complex
and in general, non-symmetric coefficients
and let  $J_B[u,v] = \inprod{B \conn u, \conn v}$
whenever $u, v \in \Sob{1,2}(\cM)$. 
Suppose that there exists $\kappa > 0$
such that  $\re J_B[u,u] \geq \kappa \norm{\conn u}$. 
Then, the Lax-Milgram theorem 
yields a closed, densely-defined
operator $\Div_Bu = -\divv_\mg B \conn u$.
The homogeneous Kato square root problem is  to assert that 
$\dom(\sqrt{-\divv_\mg B \conn}) = \Sob{1,2}(\cM)$
with the estimate 
$\norm{\sqrt{-\divv_\mg B\conn} u} \simeq \norm{\conn u}$.

The Kato square root problem on $\R^n$ is
the case $\cM = \R^n$
and this  
conjecture resisted resolution
for almost forty years 
before it was finally settled 
in 2002 by 
Auscher, Hoffman, Lacey, McIntosh and Tchamitchian 
in \cite{AHLMcT}.
Later, this problem 
was rephrased from a first-order
point of view by Axelsson, Keith, and McIntosh
in \cite{AKMc}. This seminal paper contained
the first Kato square root result for compact
manifolds, but the operator
in consideration was inhomogeneous.

In the direction of non-compact manifolds, 
this approach was subsequently used by 
Morris in \cite{Morris} to solve a similar
inhomogeneous problem on Euclidean submanifolds. 
Later, in the intrinsic geometric setting, 
this problem was solved by McIntosh and the author
in \cite{BMc} on smooth manifolds (possibly non-compact) 
assuming 
a lower bound on injectivity radius and a bound on Ricci curvature.
Again, these results were for inhomogeneous operators
and are unsuitable for
our setting where we deal with the homogeneous kind. In \S\ref{Sec:Kato},
we use the framework and other results in \cite{BMc} to
solve the homogeneous problem.

The solution to the homogeneous Kato square
root problem is relevant to us for the following
reason. 
Underpinning the Kato square root estimate is 
a \emph{functional calculus} and due to the fact that we allow
for complex coefficients, we obtain holomorphic
dependency of this calculus.
This, in turn, provides us with 
Lipschitz estimates for small perturbations
of the (non-linear) operator $B \mapsto \sqrt{-\divv_\mg B \conn}$.
This is the crucial estimate that yields
the continuity result in our main theorem.

To demonstrate the usefulness of 
our results, we give an application of
Theorem \ref{Thm:Main}
to the aforementioned geometric flow introduced
by Gigli and Mantegazza.
In \S\ref{Sec:App}, we demonstrate under a very weak 
hypothesis that this flow is continuous.
We remark that this is the first 
instance known to us 
where the Kato square root problem
has been used in the context
of geometric flows. 
We hope that this paper provides an impetus
to further investigate 
the relevance of Kato square root results to geometry, particularly 
given the increasing prevalence  of the continuity
equation in geometric problems.
\section*{Acknowledgements}

This research was conducted during the
``Junior Trimester Program on Optimal Transport'' at
the Hausdorff Research Institute for Mathematics
in Bonn, Germany. We thank the Institute for funding and support.

The author thanks Sajjad Lakzian, Mike Munn and Rupert McCallum
for useful discussions that lead to this work.
Moreover, the author would like to acknowledge 
and thank 
Alan McIntosh for his continual encouragement
and support in fostering connections 
between harmonic analysis and geometry.
\section{The structure and solutions of the equation}

Throughout this paper, let us fix the manifold 
$\cM$ to be a smooth, compact manifold
and, unless otherwise stated, let $\mg$
be a smooth Riemannian metric.
We regard $\conn: \Sob{1,2}(\cM) \subset \Lp{2}(\cM) \to \Lp{2}(\cotanb\cM)$
to be the closed, densely-defined 
extension of the exterior derivative
on functions with domain $\Sob{1,2}(\cM)$, 
the first $\Lp{2}$-Sobolev space on $\cM$. 
Moreover, we let $\divv_\mg = -\adj{\conn}$, 
with domain $\dom(\divv_\mg) \subset \Lp{2}(\cotanb\cM)$.
Indeed, operator theory yields that
this is a densely-defined and closed operator
(see, for instance, Theorem 5.29 in \cite{Kato}
by Kato). 
The $\Lp{2}$-Laplacian on $(\cM,\mg)$
is then $\Lap_\mg = -\divv_\mg \conn$
which can easily be checked to be 
a non-negative self-adjoint operator with energy 
$ \sE[u] = \norm{\conn u}^2.$

In their paper \cite{BLM}, the authors
prove existence and uniqueness to
elliptic problems of the form 
\begin{equation*}
\tag{E}
\label{Def:E}
\Div_A u = -\divv_\mg A \conn u = f,
\end{equation*}
for suitable source data $f \in\Lp{2}(\cM)$,
where the coefficients
$A$ are symmetric, bounded, measurable
and for which there exists a $\kappa > 0$
satisfying $\inprod{Au,u} \geq  \kappa \norm{u}^2$.
The key to relating this equation 
to \eqref{Def:PE} is that, 
the source data $f$ can be chosen
independent of the coefficients $A$.

The operator $\Div_A$ is self-adjoint
on the domain $\dom(\Div_A)$ 
supplied via the Lax-Milgram theorem
by considering the symmetric form 
$J_A[u,v] = \inprod{A \conn u, \conn v}$
whenever $u, v \in \Sob{1,2}(\cM)$.
Since the coefficients are symmetric, 
we are able to write 
$J_A[u,v] = \inprod{\sqrt{\Div_A} u, \sqrt{\Div_A} v}.$
By the operator theory
of self-adjoint operators, 
we obtain that 
$\Lp{2}(\cM) = \nul(\Div_A) \oplus^\perp \close{\ran(\Div_A)}$, 
where by $\nul(\Div_A)$ and $\ran(\Div_A)$, 
we denote the \emph{null space} and \emph{range} of $\Div_A$
respectively.
Similarly, 
$\Lp{2}(\cM) = \nul(\sqrt{\Div_A}) \oplus^\perp \close{\ran(\sqrt{\Div_A})}$.
See, for instance,
the paper \cite{CDMcY} by Cowling, Doust, McIntosh and Yagi.

First, we note that, due to the divergence
structure of this equation,
an easy operator theory argument
yields $\nul(\Div_A) = \nul(\conn) = \nul(\sqrt{\Div_A})$. 
The characterisation of $\close{\ran(\Div_A)}$
independent of $\Div_A$
rests on the fact that,
by the compactness of $\cM$ and smoothness 
of  $\mg$, there exists a
\Poincare inequality of the form 
\begin{equation*}
\tag{P}
\label{Def:P}
\norm{ u - u_{\cM,\mg}}_{\Lp{2}} \leq C \norm{\conn u}_{\Lp{2}},
\end{equation*}
where $u_{\cM,\mg} = \fint_{\cM} u\ d\mu_\mg$
(see, for instance Theorem 2.10 in \cite{Hebey}  by 
Hebey).
The constant $C$ can be 
taken to be  $\lambda_1(\cM,\mg)$, 
the lowest non-zero eigenvalue of the
Laplacian $\Lap_\mg$ of $(\cM,\mg)$.
The space $\close{\ran(\Div_A)}$
and $\close{\ran(\sqrt{\Div_A})}$
can then be characterised as
the set 
$$ \ran = \set{ u \in \Lp{2}(\cM): \int_{\cM} u\ d\mu_\mg = 0}.$$
A proof of this can be found as
Proposition 4.1 in \cite{BLM}.

Recall that, again 
as a consequence of the fact that $(\cM,\mg)$
is smooth and compact, the embedding $E: \Sob{1,2}(\cM) \to \Lp{2}(\cM)$
is compact (see Theorem 2.9 in \cite{Hebey}).
In Proposition 4.4 in \cite{BLM},
the authors 
use this fact to show that the 
the spectrum of $\Div_A$
is \emph{discrete}, i.e., 
$\spec(\Div_A) = \set{0 = \lambda_0 \leq \lambda_1 \leq \dots \leq \lambda_k \leq \dots}.$
Coupled with the \Poincare inequality, 
we can obtain that the operator exhibits a spectral gap 
between the zero and the first-nonzero eigenvalues.
That is, $\lambda_0  < \lambda_1$.
Moreover, $\kappa \lambda_1(\cM,\mg) \leq \lambda_1$.

It is a fact from operator theory
that the operator $\Div_A$ 
preserves the
subspaces $\nul(\Div_A)$ and 
$\close{\ran(\Div_A)}$.
Consequently, the operator 
$\Div_A^R = \Div_A \rest{\close{\ran(\Div_A)}}$
has spectrum 
$\spec(\Div_A^R) = \set{0 <  \lambda_1 \leq \lambda_2 \leq \dots}.$
Collating these facts together,
we obtain the following.

\begin{theorem}
\label{Thm:EU}
For every $f \in \Lp{2}(\cM)$ satisfying $\int_{\cM} f\ d\mu_\mg = 0$,
we obtain a unique solution $u \in \dom(\Div_A) \subset \Sob{1,2}(\cM)$ 
with $\int_{\cM} u\ d\mu_\mg = 0$ to the equation
$\Div_A u = f$. This solution is given by
$u = (\Div_A^R)^{-1}f$.
\end{theorem}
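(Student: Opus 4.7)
The plan is to read off the result directly from the structural facts about $\Div_A$ developed in the discussion immediately preceding the theorem. First I would observe that the zero-mean hypothesis $\int_{\cM} f\ d\mu_\mg = 0$ places $f$ in the subspace $\ran = \close{\ran(\Div_A)}$, by the characterisation of the range recalled just above (Proposition 4.1 of \cite{BLM}). Then I would invoke the spectral decomposition $\Lp{2}(\cM) = \nul(\Div_A) \oplus^\perp \close{\ran(\Div_A)}$ to justify splitting the problem into the trivial kernel component (which is forced to be zero by the mean-zero normalisation on $u$) and the range component, on which we must actually solve the equation.

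Next I would work with the restricted operator $\Div_A^R = \Div_A\rest{\close{\ran(\Div_A)}}$. Since the spectrum of $\Div_A$ is discrete with $\lambda_0 = 0$ isolated and $\Div_A$ preserves the decomposition, $\spec(\Div_A^R) = \{\lambda_1 \leq \lambda_2 \leq \dots\}$ is bounded away from $0$ by the spectral gap $\lambda_1 \geq \kappa \lambda_1(\cM,\mg) > 0$ established from \eqref{Def:P}. Thus $0 \in \rset(\Div_A^R)$ and $(\Div_A^R)^{-1}$ is a bounded operator on $\close{\ran(\Div_A)}$. Setting $u := (\Div_A^R)^{-1} f$ therefore yields an element of $\dom(\Div_A^R) \subset \dom(\Div_A) \subset \Sob{1,2}(\cM)$ satisfying $\Div_A u = f$, and since $u \in \close{\ran(\Div_A)}$ we automatically have $\int_{\cM} u\ d\mu_\mg = 0$ by the range characterisation.

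For uniqueness, suppose $u_1, u_2 \in \dom(\Div_A)$ both solve $\Div_A u_i = f$ with $\int_{\cM} u_i\ d\mu_\mg = 0$. Then $w := u_1 - u_2 \in \dom(\Div_A)$ satisfies $\Div_A w = 0$, so $w \in \nul(\Div_A) = \nul(\conn)$, which on a connected compact manifold consists of constants. Combined with $\int_{\cM} w\ d\mu_\mg = 0$, this forces $w = 0$.

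There is really no main obstacle here: all of the analytic content (self-adjointness and domain of $\Div_A$ via Lax-Milgram, the orthogonal decomposition, the identification of the null space with constants, the range characterisation via \eqref{Def:P}, and the discreteness of the spectrum with a strictly positive first nonzero eigenvalue) has been assembled in the preceding paragraphs. The theorem is thus a short bookkeeping argument. The only point worth stating carefully is that $(\Div_A^R)^{-1}$ maps into $\dom(\Div_A)$, not merely into $\Lp{2}(\cM)$, so that $u$ genuinely lies in $\Sob{1,2}(\cM)$ and the equation $\Div_A u = f$ is meaningful in the strong sense rather than only in the form sense.
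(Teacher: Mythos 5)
Your proposal is correct and follows exactly the route the paper intends: the paper offers no written proof beyond the remark ``Collating these facts together,'' and the facts it collates are precisely the ones you invoke---the orthogonal decomposition $\Lp{2}(\cM) = \nul(\Div_A) \oplus^\perp \close{\ran(\Div_A)}$, the mean-zero characterisation of the range, $\nul(\Div_A) = \nul(\conn)$, the discrete spectrum with spectral gap, and boundedness of $(\Div_A^R)^{-1}$. Your writeup is simply a careful unpacking of that sentence, with the uniqueness step via the \Poincare inequality made explicit.
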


For the purposes of legibility, we
write $\Div_A^{-1}$ in place of $(\Div_A^R)^{-1}$.
\section{An application to a geometric flow}
\label{Sec:App}

In this section, we describe an application 
of Theorem \ref{Thm:Main} to a geometric flow
first proposed by Gigli and Mantegazza in \cite{GM}.
In their paper, they consider solving the continuity 
equation
\begin{equation*}
\label{Def:GMC}
\tag{GMC}
-\divv_{\mg,y} \hk^\mg_t(x,y) \conn \phi_{t,x,v}(y) = \extd_{x}(\hk^\mg_t(x,y))(v),
\end{equation*}
for each fixed $x$,
where $\hk^\mg_t$ is the heat kernel 
of $\Lap_\mg$, $\divv_{\mg,y}$ denotes
the divergence operator acting on the variable $y$,
where $v \in \tanb_x \cM$, and $\extd_x(\hk^\mg_t(x,y)(v)$
is the directional derivative of $\hk^\mg_t(x,y)$
in the variable $x$ in the direction $v$.
They define a new family of metrics
evolving in time by the expression
\begin{equation*}
\label{Def:GM}
\tag{GM}
\mg_t(x)(u,v) = \int_{\cM} \mg(y)(\conn \phi_{t,x,u}(y), \conn \phi_{t,x,v}(y))\ \hk^\mg_t(x,y)\ d\mu_\mg(y).
\end{equation*}

As aforementioned, this flow is of importance
since it is tangential (a.e. along geodesics) 
to the Ricci flow when starting with a smooth 
initial metric.
Moreover, in \cite{GM}, the authors
demonstrate that this flow is equal to a certain
heat flow in the Wasserstein space, 
and define a flow of a distance
metric for the recently developed $\RCD$-spaces.
These are metric spaces that have a
notion of lower bound of a generalised Ricci 
curvature (formulated in the language of mass transport)
and for which their Sobolev spaces are Hilbert.
We refer the reader to the seminal work of  Ambrosio, Savar\'e,
and Gigli in \cite{AGS} as well as the work
of Gigli in \cite{G} for a
detailed description of these spaces and their properties.

In \cite{BLM}, the authors were interested in the
question of proving existence and regularity of
this flow when the metric $\mg$ was no longer assumed to be
smooth or even continuous.
The central geometric objects for them are
\emph{rough metrics}, which are a sufficiently large
class of symmetric tensor fields 
which are able to capture singularities, 
including, but not limited to, Lipschitz transforms
and certain conical singularities.
The underlying differentiable structure of the manifold
is always assumed to be smooth, and hence,
rough metrics capture \emph{geometric} singularities.

More precisely, let $\mgt$ 
be a symmetric $(2,0)$ tensor field
and suppose at each point $x \in \cM$, 
there exists a chart $(\psi_x, U_x)$ near
$x$ and a constant $C = C(U_x) \geq 1$
satisfying
$$ C^{-1} \modulus{u}_{\pullb{\psi_x}\delta(y)}
	\leq \modulus{u}_{\mgt(y)} \leq C \modulus{u}_{\pullb{\psi_x}\delta(y)},$$
for $y$ almost-everywhere (with respect to $\pullb{\psi_x}\Leb$,
the pullback of the Lebesgue measure) inside $U_x$,
where $u \in \tanb_y \cM$, and where $\pullb{\psi_x}\delta$ is the pullback 
of the Euclidean metric inside $(\psi_x, U_x)$.
A tensor field $\mgt$ satisfying this condition 
is called a \emph{rough metric}.
Such a metric may not, in general, induce 
a length structure, but (on a compact manifold) it will induce
an $n$-dimensional Radon measure.
 
Two rough metrics $\mgt_1$ and $\mgt_2$
are said to be $C$-close (for $C \geq 1$) if
$$ 
C^{-1} \modulus{u}_{\mgt_1(x)} 
	\leq\modulus{u}_{\mgt_2(x)}
	\leq C \modulus{u}_{\mgt_1(x)},$$
for almost-every $x$ and where $u \in \tanb_x \cM$.
For any two rough metrics, there exists
a symmetric measurable $(1,1)$-tensor field 
$B$ such that
$\mgt_1(Bu,v) = \mgt_2(u,v)$.
For $C$-close rough metrics,
$C^{-2} \modulus{u} \leq \modulus{B(x)u} \leq C^{2} \modulus{u}$
in either induced norm.
In particular, this means that 
their $\Lp{p}$-spaces are equal 
with equivalent norms. 
Moreover, Sobolev spaces exist as 
Hilbert spaces, and these spaces
are also equal with comparable norms.
On writing $\theta = \sqrt{\det B}$,
which denotes the density for
the change of measure $d\mu_{\mgt_2} = \sqrt{\det{B}}\ d\mu_{\mgt_1}$,
the divergence operators satisfy
$ \divv_{\mgt_2} = \theta^{-1} \divv_{\mgt_1}\theta B$,
and the Laplacian 
$\Lap_{\mgt_2} = \theta^{-1} \divv_{\mgt_1} \theta B \conn$.
Since we assume $\cM$ is compact, for any rough metric $\mgt$,
there exists a $C \geq 1$ and a smooth
metric $\mg$ that is $C$-close. 

As far as the author is
aware, the notion of a rough metric
was first introduced by the author
in his investigation 
of the geometric invariances
of the Kato square root problem
in \cite{BRough}. However, 
a notion close to this exists
in the work of Norris in \cite{Norris}
and the notion of $C$-closeness
between two continuous 
metrics can be found in \cite{Simon}
by Simon and in \cite{SC} by Saloff-Coste.

There is an important connection between divergence
form operators and rough metrics,
and this is crucial to  the analysis
carried out in \cite{BLM}.
The authors noticed that equation \eqref{Def:GMC}
and the flow \eqref{Def:GM}
still makes sense
if the initial metric $\mg$ was replaced
by a rough metric $\mgt$.
To fix ideas, let us denote a rough 
metric by $\mgt$ and by $\mg$, a smooth metric
that is $C$-close. 
In this situation, we can write the equation
\eqref{Def:GMC}  equivalently 
in the form
\begin{equation*}
\label{Def:GMC'}
\tag{GMC'}
-\divv_{\mg,y} \hk^\mgt_t(x,y) B \theta\conn \phi_{t,x,v} = \theta \extd_x(\hk^{\mgt}_t(x,y))(v).
\end{equation*}
Indeed, it is essential  
to  understand the heat
kernel of $\Lap_\mgt$ and
its regularity to 
make sense of the right hand side of this equation.
In \cite{BLM}, the authors assume 
$\hk^\mgt_t \in \Ck{0,1}(\cM)$ and
further 
assuming $\hk^\mg_t \in \Ck{k}(\cN^2)$,
for $k \geq 2$ and where $\emptyset \neq \cN \subset \cM$ 
represents a ``non-singular''
open set, they show the existence
of solutions to \eqref{Def:GMC'}
and provide a time evolving family 
of metrics $\mg_t$ defined via the equation \eqref{Def:GM}
on $\cN$ of regularity $\Ck{k-2,1}$.
We remark that this set typically arises
as $\cN = \cM \setminus \cS$
where $\cS$ is some singular part of $\mg$. 
For instance, 
for a cone attached to a sphere at the north 
pole, we have that $\cS = \set{p_{\text{north}}}$,
and on $\cN$, both the metric and heat 
kernel are smooth.

The aforementioned assumptions are not a restriction 
to the applications that the authors of \cite{BLM}
consider as their primarily goal was
to consider geometric conical singularities,
and spaces like a box in Euclidean space.
All these spaces are, in fact, $\RCD$-spaces and such spaces have been 
shown to always have Lipschitz heat kernels.
General rough metrics
may fail to be $\RCD$, and more seriously, 
even fail to induce a metric.
However, for such metrics, the following
still holds.

\begin{proposition}
For a rough metric $\mgt$, 
the heat kernel $\hk^\mgt_t$ for $\Lap_\mgt$ exists
and for every $t > 0$, there exists some $\alpha >0$ 
such that $\hk^\mgt_t \in \Ck{\alpha}(\cM)$.
\end{proposition}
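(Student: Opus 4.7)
The plan is to realise $\Lap_\mgt$ as a uniformly elliptic divergence-form operator against a smooth reference measure in local coordinates, and then invoke the classical Nash--De Giorgi--Moser theory for parabolic equations with bounded measurable coefficients.

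Since $\cM$ is compact, I would fix, as recorded in the excerpt, a smooth metric $\mg$ that is $C$-close to $\mgt$ together with the symmetric $(1,1)$-tensor $B$ satisfying $\mgt(Bu,v) = \mg(u,v)$ and the density $\theta = \sqrt{\det B}$, bounded above and below by constants depending only on $C$. Then $\Lap_\mgt = \theta^{-1}\divv_\mg(\theta B \conn)$ is the non-negative self-adjoint operator associated to the symmetric form $J[u,v] = \int_\cM \mg(\theta B \conn u, \conn v)\, d\mu_\mg$ on $\Sob{1,2}(\cM)$ (identified as before for the two equivalent Sobolev structures). This yields a strongly continuous analytic semigroup $(e^{-t \Lap_\mgt})_{t \geq 0}$ on $\Lp{2}(\cM,\mgt)$. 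Pulling everything back under a smooth chart $(\psi, U)$, the form becomes $\int_{\psi(U)} a^{ij}(x) \partial_i u\, \partial_j v\, \omega(x)\, dx$ for real, bounded, measurable and uniformly elliptic coefficients $a^{ij}$ and a density $\omega$ comparable to $1$. Thus locally we recover exactly the Euclidean divergence-form setting.

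To produce a heat kernel, I would use Nash's ultracontractivity argument locally, combined with a finite cover by charts and a partition of unity, to deduce $\Lp{1}\!\to\!\Lp{\infty}$ boundedness of $e^{-t\Lap_\mgt}$ for every $t > 0$. This yields the existence of a bounded, measurable, symmetric integral kernel $\hk^\mgt_t(x,y)$. For the H\"older regularity, fix $t_0 > 0$ and $y_0 \in \cM$: the function $(t,x) \mapsto \hk^\mgt_t(x,y_0)$ is a local weak solution to $\partial_t u + \Lap_\mgt u = 0$ on $(t_0/2, 2 t_0) \times U$ for any chart $U$. The classical parabolic De Giorgi--Nash--Moser theorem (transported into the chart and applied to $a^{ij}$) then produces an exponent $\alpha = \alpha(n, \kappa, C) > 0$ and a H\"older seminorm bound depending only on the $\Lp{\infty}$ bound of the kernel and the ellipticity. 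Symmetry in $(x,y)$ and a finite covering argument promote this to a uniform H\"older bound on all of $\cM$, giving $\hk^\mgt_t \in \Ck{\alpha}(\cM \times \cM)$ and in particular $\hk^\mgt_t \in \Ck{\alpha}(\cM)$ for each fixed $t > 0$.

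The main obstacle is the intrinsic-to-Euclidean reduction: one must check that because $\theta, \theta^{-1}$ and $B, B^{-1}$ are uniformly bounded, the variable-measure Euclidean operator $\omega^{-1}\partial_i(a^{ij} \omega\, \partial_j\,\cdot)$ obtained in the chart falls exactly within the hypotheses of De Giorgi--Nash--Moser (either by absorbing $\omega$ into the ellipticity constants, or by rewriting the equation with $\omega\, dx$ as the reference measure and observing that the resulting weight is Muckenhoupt $A_2$ trivially since $\omega \simeq 1$). Once this is checked, the H\"older exponent $\alpha$ depends only on dimension, $\kappa$, and $C$, and so the conclusion follows.
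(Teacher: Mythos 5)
Your proposal is correct, but it takes a genuinely different and more elementary route than the paper. The paper's proof is essentially a one-line appeal to abstract Dirichlet-form machinery: it passes to a smooth $C$-close metric $\mg$, observes that the measure contraction property is stable under $C$-close perturbations of metric and measure, and then invokes Theorem~7.4 of Sturm \cite{ST1}, which delivers both existence and H\"older regularity of the heat kernel through a parabolic Harnack inequality on the associated metric measure Dirichlet space. You instead transfer everything to local coordinates via the same $C$-closeness and apply classical Euclidean tools: Nash's ultracontractivity argument to produce a bounded symmetric kernel, followed by the parabolic De~Giorgi--Nash--Moser interior estimate in charts, patched by a finite cover, for the H\"older continuity. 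Morally these coincide --- Sturm's theorem is precisely a Dirichlet-space generalisation of De~Giorgi--Nash--Moser --- but your version avoids the metric-measure-space framework at the cost of explicit chart-by-chart bookkeeping, while the paper's route comes with sharper quantitative by-products (two-sided Gaussian bounds) and applies beyond compact manifolds. Two points in your sketch are worth tightening: the ultracontractivity is more cleanly derived from a single global Nash (or Sobolev) inequality on the compact smooth manifold $(\cM,\mg)$, transferred to the rough Dirichlet form by comparability of forms and measures, rather than by a partition-of-unity patching of local estimates; and before invoking De~Giorgi--Nash--Moser one should note that $\hk^\mgt_t(\mdot,y_0)$ actually lies in the local Sobolev class required of a weak solution --- for $t$ bounded away from zero this follows from semigroup smoothing, since $\hk^\mgt_t(\mdot,y_0) = e^{-(t/2)\Lap_\mgt}\hk^\mgt_{t/2}(\mdot,y_0) \in \dom(\Lap_\mgt) \subset \Sob{1,2}(\cM)$.
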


This result is due to the fact that
the notion of \emph{measure contraction property}
is preserved under $C$-closeness,
and hence, by Theorem 7.4 in \cite{ST1}
by Sturm, 
one can obtain the
existence and regularity of the heat kernel 
by viewing $\Lap_\mgt$ 
as a divergence form operator on the nearby smooth metric $\mg$.
A more detailed proof of this fact can be 
found in the proof of Theorem 5.1 in \cite{BLM}.

In order to proceed, we note the following
existence and uniqueness
result to solutions of the equation \eqref{Def:GMC'}.
\begin{proposition}
\label{Prop:E} 
Suppose that $\hk^\mg_t \in \Ck{1}(\cN^2)$
where $\emptyset \neq \cN \subset \cM$
is an open set.
Then, for each $x \in \cN$,
the equation \eqref{Def:GMC'} has a unique
solution $\phi_{t,x,v} \in \Sob{1,2}(\cM)$ satisfying
$ \int_{\cM} \phi_{t,x,v}\ d\mu_\mgt = 0.$
This solution is given by
$$ \phi_{t,x,v} = \Div_{x}^{-1}(\theta \eta_{t,x,v}) - \fint_{\cM} \Div_{x}^{-1}(\theta \eta_{t,x,v})\ d\mu_\mgt,$$
where $\Div_{x} u = -\divv_{\mg,y} \hk^{\mg}_t(x,y)\conn u$
and $\eta_{t,x,v} = \extd_x(\hk^\mg_t(x,Y))(v)$.
\end{proposition}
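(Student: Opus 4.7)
The plan is to realize $\phi_{t,x,v}$ as an application of \thmref{Thm:EU} to the operator $\Div_x$ with source datum $\theta\eta_{t,x,v}$, and then to normalize by subtracting the $\mu_\mgt$-mean so that the stated integral condition holds.

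The first step is to confirm that $\Div_x$ falls within the scope of \thmref{Thm:EU}. I need to check that, for fixed $x\in\cN$, the scalar coefficient $y\mapsto\hk^\mg_t(x,y)$ is bounded, measurable, symmetric, and elliptic on $\cM$. Measurability and boundedness from above follow from the global $\Ck{\alpha}$ regularity of the heat kernel (given by the previous proposition) together with the compactness of $\cM$. The nontrivial input is the uniform lower bound $\hk^\mg_t(x,\cdot)\geq c_{t,x}>0$, which comes from strict positivity of the heat kernel---a consequence of the strong maximum principle applied to the divergence-form operator $\Lap_\mg$ on the compact smooth manifold $(\cM,\mg)$---combined with continuity. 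Ellipticity of the operator actually appearing in \eqref{Def:GMC'} then follows by noting that $B$ and $\theta=\sqrt{\det B}$ are bilaterally bounded by $C$-closeness.

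The second step is to verify that $\theta\eta_{t,x,v}$ lies in $\Lp{2}(\cM)$ and has vanishing mean against the appropriate measure. Since $x\in\cN$ is interior and $\hk^\mg_t\in\Ck{1}(\cN^2)$, the function $y\mapsto\extd_x(\hk^\mg_t(x,y))(v)$ is continuous on $\cN$; together with standard global heat-semigroup regularity for the smooth background, this places $\eta_{t,x,v}$ in $\Lp{\infty}(\cM)$, and multiplication by the bounded density $\theta$ keeps it in $\Lp{2}(\cM)$. For the mean condition, I would differentiate the heat-kernel normalization $\int_\cM \hk_t(x,y)\,d\mu(y)=1$ in the direction $v\in\tanb_x\cM$; the $\Ck{1}$ hypothesis justifies differentiation under the integral and yields the vanishing-mean identity after converting measures via $d\mu_\mgt=\theta\,d\mu_\mg$.

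With these verifications, \thmref{Thm:EU} produces a unique $\psi:=\Div_x^{-1}(\theta\eta_{t,x,v})\in\dom(\Div_x)\subset\Sob{1,2}(\cM)$ with $\mu_\mg$-mean zero satisfying $\Div_x\psi=\theta\eta_{t,x,v}$. Setting $\phi_{t,x,v}:=\psi-\fint_\cM\psi\,d\mu_\mgt$, the equation is preserved because $\Div_x$ annihilates constants, and $\phi_{t,x,v}$ now has vanishing $\mu_\mgt$-mean as required. Uniqueness follows because any two candidate solutions differ by a constant (by \thmref{Thm:EU} applied in $\close{\ran(\Div_x)}$), and the $\mu_\mgt$-mean-zero constraint pins that constant down. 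The main obstacle I anticipate is the bookkeeping that reconciles the two measures $\mu_\mg$ and $\mu_\mgt$ and the two heat kernels $\hk^\mg_t$ and $\hk^\mgt_t$ appearing in $\Div_x$, $\eta_{t,x,v}$, and the target equation \eqref{Def:GMC'}: each has to be matched through $B$ and $\theta$ so that \thmref{Thm:EU}, which is naturally phrased against $\mu_\mg$, delivers a solution whose normalization aligns with the $\mu_\mgt$-mean-zero requirement.
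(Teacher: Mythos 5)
Your proposal is correct and follows essentially the same route as the paper, which simply invokes Propositions 4.6 and 4.7 of \cite{BLM} (the analogues of \thmref{Thm:EU} and the normalization step) together with Lemma 5.4 of \cite{BLM} for strict positivity of the heat kernel. You reconstruct the argument directly from \thmref{Thm:EU}: ellipticity of $y \mapsto \hk_t(x,y)$ via strict positivity plus continuity, square-integrability and vanishing mean of $\theta\eta_{t,x,v}$ via differentiating the kernel normalization, and then the $\mu_\mgt$-mean re-normalization (which the paper addresses in the remark following the proposition). One small point of attribution rather than a gap: where you invoke the strong maximum principle for $\Lap_\mg$ on the smooth manifold, the heat kernel actually in play is that of the rough metric $\mgt$, viewed as a divergence-form operator with merely bounded measurable coefficients relative to $\mg$; the paper deliberately routes this through Lemma 5.4 of \cite{BLM}, which supplies positivity under only $\Ck{0}$ regularity of $\hk^\mgt_t$, precisely so that the argument survives in the rough setting.
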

\begin{proof}
We note that the proof of this
proposition runs in a very similar 
way to Proposition 4.6 and 4.7 in \cite{BLM}. 
Note that the first proposition simply requires
that $\hk^\mg_t \in \Ck{0}(\cM^2)$,
and that $\hk^\mg_t > 0$.
This latter inequality is yielded by 
Lemma 5.4 in \cite{BLM},
which again, only requires 
that $\hk^\mg_t \in \Ck{0}(\cM^2)$. 
\end{proof}

\begin{remark}
When inverting this operator $\Div_{x}$
as a divergence form operator on 
the nearby smooth metric $\mg$,
the solutions $\psi_{t,x,v} = \Div_{x}^{-1} (\theta \eta_{t,x,v})$
satisfy $\int_{\cM} \psi_{t,x,v}\ d\mu_\mg = 0$. 
The adjustment by subtracting
$\fint_{\cM} \psi_{t,x,v}\ d\mu_\mgt$
to this solution is to ensure
that $\int_{\cM} \phi_{t,x,v}\ d\mu_\mgt = 0$. 
That is, the integral with respect to $\mu_\mgt$,
rather than $\mu_\mg$, is zero.
\end{remark}

Collating these results together, and
invoking Theorem \ref{Thm:Main}, 
we obtain the following. 
\begin{theorem}
Let $\cM$ be a smooth, compact manifold,
and $\emptyset \neq \cN \subset \cM$, an
open set. Suppose that $\mgt$ is a rough metric
and that $\hk^\mgt_t \in \Ck{1}(\cN^2)$.
Then, $\mg_t$ as defined by \eqref{Def:GM}
exists on $\cN$ and it is continuous.
\end{theorem}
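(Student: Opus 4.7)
Existence of $\mg_t$ on $\cN$ is immediate from Proposition~\ref{Prop:E}: for each $x \in \cN$ and $v \in \tanb_x\cM$, the unique solution $\phi_{t,x,v} \in \Sob{1,2}(\cM)$ to \eqref{Def:GMC'} exists, and substituting into \eqref{Def:GM} defines $\mg_t(x)$. Fixing $x_0 \in \cN$ and a smooth local frame about $x_0$, it suffices to establish continuity at $x_0$ of the scalar maps $x \mapsto \mg_t(x)(u_x, v_x)$ for arbitrary smoothly-varying vector fields $u_x, v_x$ coming from a basis at $x_0$.

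The plan is to recast \eqref{Def:GM} as an $\Lp{2}(\cM)$ pairing to which Theorem~\ref{Thm:Main} applies. Taking a smooth metric $\mg$ that is $C$-close to $\mgt$ with density $\theta$ and tensor $B$ (so that $d\mu_\mgt = \theta\, d\mu_\mg$ and $\mgt(u,w) = \mg(Bu,w)$), a change of measure yields
\begin{equation*}
\mg_t(x)(u_x, v_x)
= J_x\!\left[\phi_{t,x,u_x},\phi_{t,x,v_x}\right]
= \inprod{\theta\,\eta_{t,x,u_x},\,\phi_{t,x,v_x}}_{\Lp{2}(\cM)},
\end{equation*}
where $J_x[w_1,w_2] = \inprod{A_x \conn w_1, \conn w_2}$ is the symmetric form with coefficients $A_x(y) = \hk^\mgt_t(x,y)\,\theta(y)\,B(y)$. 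The second equality uses $\phi_{t,x,u_x} \in \dom(\Div_x)$ with $\Div_x \phi_{t,x,u_x} = \theta\,\eta_{t,x,u_x}$; the additive constant in $\phi_{t,x,v_x}$ from Proposition~\ref{Prop:E} lies in $\nul(\Div_x)$ and is immaterial in this pairing since gradients annihilate constants. By a standard bilinear splitting, continuity at $x_0$ now follows once $x \mapsto \theta\,\eta_{t,x,v_x}$ is $\Lp{2}$-continuous at $x_0$ and $x \mapsto \phi_{t,x,v_x}$ is $\Lp{2}$-continuous and locally $\Lp{2}$-bounded at $x_0$.

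The $\Lp{2}$-continuity of $x \mapsto \phi_{t,x,v_x}$ at $x_0$ is furnished by Theorem~\ref{Thm:Main} applied to $\Div_x$: symmetry of $A_x$ is inherited from $B$; ellipticity at $x_0$ follows from the strict positivity $\hk^\mgt_t(x_0,\cdot) \geq c_0 > 0$ (a consequence of the global $\Ck{\alpha}$ regularity of the heat kernel together with compactness of $\cM$), combined with $B \geq C^{-2}\,\iden$ and $\theta \geq \theta_{\min} > 0$; and $\Lp{\infty}$-continuity of $x \mapsto A_x$ at $x_0$ is inherited from the uniform continuity of $\hk^\mgt_t$ on $\cM \times \cM$. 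The principal obstacle is then the $\Lp{2}$-continuity of the source $x \mapsto \theta\,\eta_{t,x,v_x} = \theta\,\extd_x \hk^\mgt_t(x,\cdot)(v_x)$, because the hypothesis $\hk^\mgt_t \in \Ck{1}(\cN^2)$ only controls the spatial derivative of the heat kernel when both variables lie in $\cN$, whereas the $\Lp{2}$-norm integrates over all $y \in \cM$. Overcoming this requires combining the local $\Ck{1}$-information on $\cN^2$, the heat-kernel symmetry $\hk^\mgt_t(x,y) = \hk^\mgt_t(y,x)$, and standard parabolic/elliptic interior estimates for $\Lap_\mgt$ to produce, on a neighbourhood of $x_0$, a uniform integrable majorant for $y \mapsto \lmodulus{\extd_x \hk^\mgt_t(x,y)(v_x)}$; once this is in hand, dominated convergence delivers the required $\Lp{2}$-continuity of the source, and Theorem~\ref{Thm:Main} together with the bilinear splitting above completes the proof.
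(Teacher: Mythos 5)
Your proposal follows essentially the same route as the paper: existence via Proposition~\ref{Prop:E}, reduction of continuity to the $\Lp{2}$-pairing $\inprod{\theta\eta_{t,x,u},\phi_{t,x,u}}$, a bilinear splitting, and then Theorem~\ref{Thm:Main} applied to $\Div_x$ with coefficients $A_x = \hk^\mgt_t(x,\cdot)\,\theta B$ to extract $\Lp{2}$-continuity of $x\mapsto\phi_{t,x,u}$. Your explicit verification of ellipticity at $x_0$ (via $\hk^\mgt_t(x_0,\cdot)\geq c_0>0$, $B\geq C^{-2}\iden$, $\theta\geq\theta_{\min}$) and your care with the $\theta$-factors distinguishing $d\mu_\mgt$ from $d\mu_\mg$ are welcome additions that the paper handles more tersely (it defers non-degeneracy to \cite{BLM} and abbreviates the measure-dependence). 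You also correctly identify the genuine subtlety: $\hk^\mgt_t\in\Ck{1}(\cN^2)$ controls $\extd_x\hk^\mgt_t(x,y)$ only when $(x,y)\in\cN\times\cN$, yet the source $\eta_{t,x,u}$ must live in $\Lp{2}(\cM)$, so $y$ ranges over all of $\cM$.

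Where your proposal falls short is that it raises this issue but does not actually resolve it. The final sentence --- combining the local $\Ck{1}$-data, heat-kernel symmetry, and ``standard parabolic/elliptic interior estimates'' to produce a majorant, then invoking dominated convergence --- is an outline, not an argument, and it is incomplete even as an outline: a dominating majorant yields dominated convergence only once one has a.e.\ pointwise convergence $\eta_{t,x,u}(y)\to\eta_{t,x_0,u}(y)$ for $y\in\cM\setminus\cN$, which is precisely the point in doubt and would itself need to come from the interior estimates you mention. The paper handles this step more bluntly: it asserts that $(x,y)\mapsto\eta_{t,x,u}(y)$ is uniformly continuous on $K\times\cM$ for $K\Subset\cN$, in effect reading the hypothesis as giving continuous differentiability in $x\in\cN$ jointly with the global $\Ck{\alpha}$-regularity of the kernel in $y$, and then concludes via compactness of $\cM$. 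So both treatments rest on the same claim; the paper asserts it directly, you scrutinize it but then gesture at a mechanism without carrying it through. To make your write-up rigorous you would need to supply the interior-estimate argument you sketch (or adopt the paper's reading of the hypothesis explicitly).
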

\begin{proof}
By Proposition \ref{Prop:E}, we obtain existence
of $\mg_t(x)$ for each $x \in \cN$
as a Riemannian metric. The fact that it
is a non-degenerate inner product
follows from similar argument
to that of the proof of Theorem 3.1  in \cite{BLM}, 
which only requires the continuity of $\hk^\mg_t$.

Now, to prove that $x \mapsto \mg_t(x)$
is continuous,it suffices to 
prove that  $x\mapsto \modulus{u}_{\mg_t(x)}^2$
as a consequence of polarisation.
Here, we fix a coordinate chart
$(\psi_x, U_x)$ near $x$ and consider
$u = \pushf{\psi_x^{-1}}\tilde{u}$, 
where $\tilde{u} \in \R^n$ is a constant vector
inside $(\psi_x, U_x)$.
In this situation, we note that \eqref{Def:GM}
can be written in the following way: 
$$ \modulus{u}_{\mg_t(x)}^2 
	= \inprod{\Div_x \phi_{t,x,u}, \phi_{t,x,u}}
	= \inprod{\eta_{t,x,u}, \phi_{t,x,u}}.$$
Now, to prove continuity, we need to prove that 
$\modulus{\modulus{u}_{\mg_t(x)} - \modulus{u}_{\mg_t(y)}}$
can be made small when $y$ is sufficiently close to
$x$. This is obtained if,
each of $\modulus{\inprod{\eta_{t,x,u} - \eta_{t,y,u}, \phi_{t,x,u}}}$
and $\modulus{\inprod{\eta_{t,y,u}, \phi_{t,x,u} - \phi_{t,y,u}}}$
can be made small.

The first quantity is easy: 
$$\modulus{\inprod{\eta_{t,x,u} - \eta_{t,y,u}, \phi_{t,x,u}}}
	\leq \norm{\eta_{t,x,u} - \eta_{t,y,u}} \norm{\phi_{t,x,u}},$$
and by our assumption on $\hk^\mg_t(x,z)$ that it is continuously
differentiable for $x \in \cN$ and $\Ck{\alpha}$
in $z$,
we have that  $(x,y) \mapsto \eta_{x,t,u}(y)$ is uniformly continuous 
on $K \times \cM$ for every $K \Subset \cN$ (open subset, compactly contained
in $\cN$) by the compactness of $\cM$. Thus, on fixing $K \Subset \cN$, 
we have that for $x, y \in K$,
$$\norm{\eta_{t,x,u} - \eta_{t,y,u}}
	\leq \mu_\mgt(\cM) \sup_{z \in \cM} \modulus{\eta_{t,x,u}(z) - \eta_{t,y,u}(z)}$$ 
and the right hand side
can be made small for $y$ sufficiently close to $x$.

Now, the remaining term can be
estimated in a similar way:
$$
\modulus{\inprod{\eta_{t,y,u}, \phi_{t,x,u} - \phi_{t,y,u}}}
	\leq \norm{\eta_{t,y,u}} \norm{\phi_{t,x,u} - \phi_{t,y,u}}.$$
First, observe that,
$\norm{\eta_{t,y,u}} = \norm{\eta_{t,y,u} - \eta_{t,x,u}} + \norm{\eta_{t,x,u}}$
and hence, by our previous argument, the first term can be made small 
and the second term only depends on $x$. Thus, it 
suffices to prove that
$\norm{\phi_{t,x,u} - \phi_{t,y,u}}$ can be made small.
Note then that, 
\begin{align*}
\norm{\phi_{t,x,u} - \phi_{t,y,u}} 
	&\leq \norm{\Div_x^{-1} \theta \eta_{t,x,u} - \Div_x^{-1}\theta \eta_{t,y,u}}\\
		&\qquad\qquad+ \mu_\mgt(\cM) \cbrac{\fint_{\cM} \Div_x^{-1} \theta \eta_{t,x,u} - \Div_x^{-1}\theta \eta_{t,y,u}\ d\mu_\mgt} \\
	&\leq (1 + \mu_\mgt(\cM)) \norm{\Div_x^{-1} \theta \eta_{t,x,u} - \Div_x^{-1}\theta \eta_{t,y,u}},
\end{align*}
where the last inequality follows from the Cauchy-Schwarz
inequality applied to the average.

Again, by the assumptions on $\hk^\mgt_t$,
$$\norm{\B\theta\hk^\mgt_t(x,\mdot) - B\theta\hk^\mgt_t(y,\mdot)}_\infty
	\lesssim \norm{B\theta}_\infty \sup_{z \in \cM} \modulus{\hk^\mgt_t(x,z) - \hk^\mgt_t(y,z)}$$
which shows that  $x \mapsto B(\mdot)\theta(\mdot) \hk^\mgt_t(x,\mdot)$
is $\Lp{\infty}$-continuous.
Moreover, we have already shown 
that $(w,z) \mapsto \eta_{t,x,u}(z)$
is uniformly continuous on $K \times \cM$ 
for $K \Subset \cN$ and hence,
since $\theta$ is essentially
bounded from above and below, $x \mapsto \theta \eta_{t,x,u}$ 
is $\Lp{2}$-continuous.
Thus, we apply Theorem \ref{Thm:Main}
to obtain the conclusion.
\end{proof}

\begin{remark}
If we assume that $\mgt$ is a rough metric
on $\cM$, but away from some
singular piece $\cS$, we assume that the metric
is $\Ck{1}$, then, by the results in \S6 
of \cite{BLM}, we are able to
obtain that the heat kernel $\hk^\mgt_t \in \Ck{2}(\cM \setminus \cS)$.
Hence, we can apply this theorem
to obtain that the flow is continuous on $\cM \setminus \cS$.
In \cite{BLM} a similar theorem is 
obtained (Theorem 3.2) but requires
the additional assumption that $\hk^\mgt_t \in \Ck{1}(\cM^2)$.
\end{remark}
\section{Proof of the theorem}
\label{Sec:Kato}

In this section, we prove the main theorem
by first proving a
homogeneous Kato square root result. We begin
with a description of functional calculus
tools required phrase and resolve the problem.

\subsection{Functional calculi for sectorial operators}

Let $\Hil$ be a \emph{complex} Hilbert space 
and $T: \dom(T) \subset \Hil \to \Hil$
a linear operator. Recall that the 
\emph{resolvent set} of $T$ denoted by 
$\rset(T)$ consists of 
$\zeta \in \C$ such that $(\zeta\iden - T)$ 
has dense range and a bounded inverse
on its range. It is easy 
to see that $(\zeta \iden - T)^{-1}$ 
extends uniquely to bounded operator on the whole space.
The \emph{spectrum} is then 
$\spec(T) = \C \setminus \rset(T)$.

Fix $\omega \in [0, \pi/2)$ and define 
the $\omega$-bisector and open $\omega$-bisector
respectively as 
\begin{align*} 
&\Sec{\omega} = \set{ \zeta \in \C: \modulus{\arg \zeta} \leq \omega
	\ \text{or}\ \modulus{\arg(-\zeta)} \leq \omega\ \text{or}\ \zeta = 0}\ \text{and} \\
&\OSec{\omega} = \set{ \zeta \in \C: \modulus{\arg \zeta} < \omega
	\ \text{or}\ \modulus{\arg(-\zeta)} < \omega\,\ \text{and}\  \zeta \neq 0}.
\end{align*}
An operator $T$ is said to be $\omega$-\emph{bi-sectorial} if
it is closed, $\spec(T) \subset \Sec{\omega}$, and
whenever $\mu \in (\omega, \pi/2)$,
there exist $C_{\mu}$ satisfying
the \emph{resolvent bounds}:
$\modulus{\zeta} \norm{(\zeta\iden - T)^{-1}} \leq C_\mu$
for all $\zeta \in \OSec{\mu} \setminus \Sec{\omega}$.
Bi-sectorial operators naturally generalise
self-adjoint operators: a self-adjoint operator
is $0$-bi-sectorial.
Moreover, bi-sectorial operators
admit a spectral decomposition
of the space
$\Hil = \nul(T) \oplus \close{\ran(T)}$. 
This sum is not, in general, 
orthogonal, but it is always
topological. By $\proj_{\nul(T)}: \Hil \to \nul(T)$
we denote the continuous
projection from $\Hil$ to $\nul(T)$ 
that is zero on $\close{\ran(T)}$.
 
Fix some $\mu \in (\omega, \pi/2)$
and by $\Psi(\OSec{\mu})$ denote the class
of holomorphic functions $\psi: \OSec{\mu} \to \C$
for which there exists an $\alpha > 0$ 
satisfying 
$$ \modulus{\psi(\zeta)} \lesssim 
	\frac{\modulus{\zeta}^\alpha}{1 + \modulus{\zeta}^{2\alpha}}.$$
For an $\omega$-bi-sectorial operator $T$,
we define a bounded
operator $\psi(T)$ via
$$ 
\psi(T)u = \frac{1}{2\pi\imath} \oint_{\gamma} \psi(\zeta)(\zeta\imath - T)^{-1}u\ d\zeta,$$
where $\gamma$ is an unbounded contour enveloping
$\Sec{\omega}$ counter-clockwise inside $\OSec{\mu}$ and
where the integral is defined via Riemann sums.
The resolvent bounds for the operator $T$ 
coupled with the decay of the function $\psi$
yields the absolute convergence of this integral.

Now, suppose there exists a $C > 0$
so that $\norm{\psi(T)} \leq C \norm{\psi}_\infty$. 
In this situation, we say that 
$T$ has a \emph{bounded functional calculus}.
Let $\Hol^\infty(\OSec{\mu})$
be the class of bounded functions 
$f: \OSec{\mu} \union \set{0} \to \C$
for which $f\rest{\OSec{\mu}}: \OSec{\mu} \to \C$
is holomorphic. 
For such a function, there is always
a sequence of functions $\psi_n \in \Psi(\OSec{\mu})$
which converges to $f \rest{\OSec{\mu}}$
in the compact-open topology. 
Moreover, if $T$ has a bounded 
functional calculus, the 
limit $\lim_{n \to \infty} \psi_n(T)$ 
exists in the strong operator topology, and
hence, we define 
$$ f(T)u = f(0) \proj_{\nul(T)}u + \lim_{n \to \infty} \psi_n(T)u.$$
The operator $f(T)$ is independent of the sequence
$\psi_n$, it is bounded, and moreover, 
satisfies $\norm{f(T)} \leq C \norm{f}_\infty$.
By considering the function $\chi^{+}$, 
which takes the value  $1$
for $\re \zeta > 0$ and $0$ otherwise, 
and $\chi^{-}$ taking $1$ for $\re \zeta < 0$
and $0$ otherwise, we define $\sgn = \chi^{+} - \chi^{-}$.
It is readily checked that $\sgn \in \Hol^{\infty}(\OSec{\mu})$
for any $\mu$ and hence, for $T$
with a bounded functional calculus,
the $\chi^{\pm}(T)$ define projections.
In addition to the spectral decomposition,
we obtain
$\Hil = \nul(T) \oplus \ran(\chi^{+}(T)) \oplus 
\ran(\chi^{-}(T))$.

Lastly, we remark that
a quantitative criterion for demonstrating that
$T$ has a bounded functional calculus
is to find $\psi \in \Psi(\OSec{\mu})$
satisfying the \emph{quadratic estimate}
$$ \int_{0}^\infty \norm{\psi(tT)u}^2\ \frac{dt}{t} \simeq \norm{u}^2,\quad u \in \close{\ran(T)}.$$ 
In particular, this criterion
facilitates the use of harmonic 
analysis techniques to prove
the boundedness of the functional calculus.
We refer the reader to \cite{ADMc} 
by Albrecht, Duong and McIntosh for a more complete
treatment of these ideas.

\subsection{Homogeneous Kato square root problem}

We have already given a brief historical 
overview of the Kato square root problem 
in the introduction. An important advancement, from the point of view of
proving such results on manifolds,
was the development of the first-order Dirac-type operator
approach by Axelsson, Keith and McIntosh 
in \cite{AKMc}.
Their set of hypotheses (H1)-(H8)
is easily accessed in the literature,
and therefore, we shall omit repeating them here.
For the benefit of the 
reader, we remark that the particular form that we use
here is listed in \cite{BMc}.

Let $\Hil = \Lp{2}(\cM) \oplus \Lp{2}(\cotanb\cM)$
and 
$$\Gamma = \begin{pmatrix} 0 & 0 \\ \conn & 0\end{pmatrix},
\quad\text{and}\quad
\adj{\Gamma} = \begin{pmatrix} 0 & -\divv \\ 0 & 0 \end{pmatrix}.$$
Then, for elliptic (possibly complex and non-symmetric)
coefficients $B \in \Lp{\infty}(\Tensors[1,1]\cM)$,
satisfying $\re\inprod{Bu, u} \geq \kappa_1 \norm{u}^2$,
and $b \in \Lp{\infty}(\cM)$ with $\re b(x) \geq \kappa_2$,
define
$$B_1 = \begin{pmatrix} b & 0 \\ 0 & 0\end{pmatrix},
\quad\text{and}\quad
B_2 = \begin{pmatrix} 0 & 0 \\ 0 & B \end{pmatrix}.$$
Define the Dirac-type operators
$\Pi_B = \Gamma + B_1 \adj{\Gamma} B_2$
and $\Pi = \Gamma + \adj{\Gamma}$.
The first operator is bi-sectorial 
and the second is self-adjoint (but with spectrum possibly 
on the whole real line).

First, we note that by bi-sectoriality, 
$$\Hil = \dom(\Pi) \oplus^\perp \close{\ran(\Pi)} 
	= \dom(\Pi_B) \oplus \close{\ran(\Pi_B)},$$ 
where the second direct sum is topological but
not necessarily orthogonal.
In particular, the first 
direct sum yields that
$\Lp{2}(\cM) = \nul(\conn) \oplus^\perp \close{\ran(\divv)}$
and $\Lp{2}(\cotanb \cM) = \nul(\divv) 	\oplus^\perp \close{\ran(\conn)}$.
We observe the following. 

\begin{lemma}
The space $\close{\ran(\divv)} = \set{u \in \Lp{2}(\cM): \int_{\cM} u = 0}$.
\end{lemma}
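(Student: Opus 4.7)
The plan is to deduce this characterization by combining two ingredients that are already in the text: the orthogonal decomposition furnished by the preceding paragraph and the identification of $\nul(\conn)$ via the \Poincare inequality.

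First I would note that since $\divv_\mg = -\adj{\conn}$ is the negative Hilbert-space adjoint of a closed, densely-defined operator, standard operator theory yields $\close{\ran(\divv_\mg)} = \nul(\conn)^\perp$. This is precisely what the decomposition $\Lp{2}(\cM) = \nul(\conn) \oplus^\perp \close{\ran(\divv_\mg)}$ asserted just above the lemma already gives. So the task reduces to computing $\nul(\conn)$ and taking its orthogonal complement in $\Lp{2}(\cM)$.

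Next I would use the \Poincare inequality \eqref{Def:P} to identify $\nul(\conn)$ with the constants. If $u \in \Sob{1,2}(\cM)$ satisfies $\conn u = 0$, then \eqref{Def:P} yields $\norm{u - u_{\cM,\mg}}_{\Lp{2}} \leq C \norm{\conn u}_{\Lp{2}} = 0$, so $u$ equals its mean almost-everywhere and is therefore constant. Conversely, constants clearly lie in $\nul(\conn)$. Hence $\nul(\conn)$ is the one-dimensional subspace spanned by $\One$.

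Finally, the orthogonal complement of the constants in $\Lp{2}(\cM)$ is exactly $\set{u \in \Lp{2}(\cM) : \int_\cM u\ d\mu_\mg = 0}$, since $\inprod{u, c\One} = c \int_\cM u\, d\mu_\mg$ for every $c \in \C$. Chaining the three observations gives the claimed equality. There is no real obstacle here; the lemma is essentially a restatement of Proposition 4.1 of \cite{BLM} with $\divv_\mg$ in place of $\Div_A$, and follows directly from the machinery already set up.
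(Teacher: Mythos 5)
Your proposal is correct and uses essentially the same ingredients as the paper: the orthogonal decomposition $\Lp{2}(\cM) = \nul(\conn) \oplus^\perp \close{\ran(\divv)}$ together with the \Poincare inequality \eqref{Def:P}. The paper argues the two inclusions separately (integrating $\divv v_n$ against $\One$ for one direction, Poincar\'e for the other) whereas you fold both into the single observation that $\nul(\conn)$ is spanned by $\One$, but this is a cosmetic repackaging of the same argument.
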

\begin{proof}
Let $u \in \close{\ran(\divv)}$. Then,  there
is a sequence $u_n \in \ran(\divv)$ such that 
$u_n \to u$. Indeed, $u_n = \divv v_n$, for some
vector field $v_n \in \dom(\divv)$. Thus, 
$$\int_{\cM} u\ d\mu_{\mgt} 
	=  \int_{\cM} \lim_{n\to \infty} \divv v_n\ d\mu_{\mgt}
	= \lim_{n \to \infty} \inprod{\divv v_n, 1} = 0.$$

Now, suppose that $\int_{\cM} u\ d\mu_\mgt = 0$.
Then, since $(\cM,\mgt)$ admits a \Poincare
inequality, we have that $\inprod{u, v} = 0$
for all $v \in \nul(\conn)$.
But since we have that 
$\Lp{2}(\cM) = \nul(\conn) \oplus^\perp \close{\ran(\divv)}$
via spectral theory,
we obtain that $u \in \close{\ran(\divv)}$.
\end{proof}

With this lemma, we obtain the following
coercivity estimate.

\begin{lemma}
\label{Lem:Cor}
Let $u \in \ran(\Pi) \intersect \dom(\Pi)$. 
Then, there exists a constant $C > 0$ such that
$\norm{u} \leq C \norm{ \Pi u}$.
\end{lemma}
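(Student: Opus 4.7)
The plan is to exploit the self-adjointness of $\Pi$ and reduce the desired coercivity to a spectral gap property. Since $\Pi = \Gamma + \adj{\Gamma}$ is self-adjoint, $\Hil$ splits orthogonally as $\nul(\Pi) \oplus^\perp \close{\ran(\Pi)}$, so $\ran(\Pi) \subset \close{\ran(\Pi)}$. It therefore suffices to show that $\Pi$ is bounded below on $\close{\ran(\Pi)} \cap \dom(\Pi)$, i.e.\ that $\spec(\Pi) \setminus \{0\}$ lies at a positive distance from the origin.

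To exhibit the gap, I would pass to $\Pi^2$. Since $\Gamma^2 = 0 = (\adj{\Gamma})^2$, a direct computation yields
\begin{equation*}
\Pi^2 = \Gamma \adj{\Gamma} + \adj{\Gamma}\Gamma = \begin{pmatrix} -\divv_\mg \conn & 0 \\ 0 & -\conn \divv_\mg \end{pmatrix} = \begin{pmatrix} \Lap_\mg & 0 \\ 0 & -\conn \divv_\mg \end{pmatrix}.
\end{equation*}
On the smooth compact $(\cM,\mg)$, the Laplacian $\Lap_\mg$ has discrete spectrum $\{0 = \lambda_0 < \lambda_1 \leq \cdots\}$, with $\lambda_1 > 0$ as a consequence of the \Poincare inequality \eqref{Def:P}. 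Applying the standard identity $\spec(T\adj{T}) \setminus \{0\} = \spec(\adj{T}T) \setminus \{0\}$ to $T = \conn$ (so that $\adj{T}T = \Lap_\mg$ and $T\adj{T} = -\conn \divv_\mg$) gives $\spec(-\conn\divv_\mg) \setminus \{0\} = \spec(\Lap_\mg)\setminus \{0\}$. Hence $\spec(\Pi^2) \setminus \{0\} \subset [\lambda_1, \infty)$, and taking square roots, $\spec(\Pi) \setminus \{0\} \subset (-\infty,-\sqrt{\lambda_1}] \cup [\sqrt{\lambda_1},\infty)$.

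To conclude, I would apply the spectral theorem for $\Pi$. For $u \in \close{\ran(\Pi)} \cap \dom(\Pi)$, one has $u \perp \nul(\Pi)$, so the spectral measure $d\mu_u^\Pi$ is supported away from $0$, and
\begin{equation*}
\norm{\Pi u}^2 = \int_{\spec(\Pi)} \modulus{\lambda}^2 \, d\mu_u^\Pi(\lambda) \geq \lambda_1 \int d\mu_u^\Pi(\lambda) = \lambda_1 \norm{u}^2,
\end{equation*}
yielding the claim with $C = \lambda_1^{-1/2}$.

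The main technical point to verify carefully is the identification of the nonzero spectra of $\Lap_\mg$ and $-\conn \divv_\mg$. A painless alternative that avoids this is to argue component-wise: writing $u = (f,\omega)$, the decomposition $\close{\ran(\Pi)} = \close{\ran(\divv_\mg)} \oplus \close{\ran(\conn)}$ forces $f \in \close{\ran(\divv_\mg)}$, so by the preceding lemma $\int_\cM f\, d\mu_\mg = 0$ and hence $\norm{f} \leq C_P \norm{\conn f}$ by \eqref{Def:P}. A twin inequality $\norm{\omega} \leq C_P' \norm{\divv_\mg \omega}$ for $\omega \in \close{\ran(\conn)} \cap \dom(\divv_\mg)$ is obtained by duality: testing $\inprod{\divv_\mg \omega, v}$ against a potential $v$ with $\omega = \conn v$ and using the zero-mean Poincar\'e inequality on $v - v_{\cM,\mg}$, then passing to limits in $\close{\ran(\conn)}$. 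Since $\norm{\Pi u}^2 = \norm{\conn f}^2 + \norm{\divv_\mg \omega}^2$, adding the two bounds yields $\norm{u}^2 \leq C \norm{\Pi u}^2$ as required.
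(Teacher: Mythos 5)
Your proposal is correct, and your primary argument packages the proof differently from the paper. The paper argues component-wise: writing $u = (u_1, u_2)$ with $u_1 \in \ran(\divv_\mg)$ and $u_2 \in \ran(\conn)$, it gets $\norm{u_1} \lesssim \norm{\conn u_1}$ from the previous lemma and the \Poincare inequality, and for $u_2 = \conn v$ it observes that $\divv_\mg u_2 = -\Lap_\mg v$ and bounds $\norm{\Lap_\mg v} = \norm{\sqrt{\Lap_\mg}(\sqrt{\Lap_\mg}v)} \geq C_1 \norm{\sqrt{\Lap_\mg} v} = C_1\norm{u_2}$, again via the spectral gap of $\sqrt{\Lap_\mg}$. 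Your ``painless alternative'' is essentially this proof, except that you handle the second component by the duality/Cauchy--Schwarz estimate $\norm{\omega}^2 \leq \norm{v}\norm{\divv_\mg\omega} \leq C_P \norm{\omega}\norm{\divv_\mg\omega}$ rather than by the square-root factorisation. Your main route — computing $\Pi^2 = \Gamma\adj{\Gamma} + \adj{\Gamma}\Gamma = \diag(\Lap_\mg, -\conn\divv_\mg)$, invoking $\spec(T\adj{T})\setminus\{0\} = \spec(\adj{T}T)\setminus\{0\}$ to identify the spectral gap, and concluding with the spectral theorem for the self-adjoint $\Pi$ — is conceptually cleaner and makes the constant $C = \lambda_1^{-1/2}$ explicit, at the cost of leaning on a bit more operator theory (the nonzero-spectrum identity and the functional-calculus inequality). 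Both proofs ultimately rest on the same fact, the spectral gap of $\Lap_\mg$ furnished by \eqref{Def:P}, so the difference is one of presentation: the paper's hands-on calculation is self-contained and elementary, while your spectral-theoretic version would generalise more smoothly (for example, to higher-order Hodge--Dirac operators where the component-wise bookkeeping becomes tedious).
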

\begin{proof}
Fix $u = (u_1, u_2) = \ran(\Pi) = \ran(\divv) \oplus^\perp \ran(\conn)$.
Then, $\norm{\Pi u} = \norm{\conn u_1} + \norm{\divv u_2}.$
By the \Poincare inequality along with the previous 
lemma, we obtain that $\norm{\conn{u_1}} \geq C_1 \norm{u_1}$.
For the other term, note that $\divv  u_2 = \divv \conn v = \Lap v$
for some $v \in \dom(\conn)$. Thus,
$$\norm{\Lap v} = \norm{\sqrt{\Lap} \sqrt{\Lap} v}
	\geq C_1 \norm{\sqrt{\Lap} v} = C_1 \norm{\conn v} = C_1 \norm{u_2}.$$
On setting $C = C_1$, we obtain the conclusion.
\end{proof}

Indeed, this is the key ingredient 
to obtain a bounded
functional  calculus for
the operator $\Pi_B$.

\begin{theorem}[Homogenous Kato square root problem for compact manifolds]
\label{Thm:Kato}
On a compact manifold $\cM$ with a smooth metric
$\mg$, 
the operator $\Pi_B$ admits a bounded 
functional calculus. In particular,
$\dom(\sqrt{b \divv B\conn}) = \Sob{1,2}(\cM)$
and $\norm{ \sqrt{b \divv B\conn} u} \simeq \norm{\conn u}$.
Moreover, whenever $\norm{\tilde{b}}_\infty < \eta_1$
and $\norm{\tilde{B}}_{\infty} < \eta_2$, 
where $\eta_i < \kappa_i$,
we have the following Lipschitz estimate
$$ \norm{\sqrt{b \divv B \conn} u - \sqrt{ (b + \tilde{b}) \divv(B + \tilde{B})\conn} u}
	\lesssim (\norm{\tilde{b}}_\infty + \norm{\tilde{B}}_\infty) \norm{ \conn u}$$
whenever $u \in \Sob{1,2}(\cM)$. The implicit constant depends on $B_i$ and $\eta_i$. 
\end{theorem}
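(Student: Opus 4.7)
The plan is to cast $\Pi_B$ into the first-order Dirac-type framework of \cite{AKMc} and verify the standing hypotheses (H1)--(H8) in the form listed in \cite{BMc}. Most of these are structural: nilpotence and closedness of $\Gamma$ are immediate from $\conn$ being closed densely-defined; the off-diagonal/multiplicative form of $B_1$ and $B_2$ gives the required algebraic cancellation identities; boundedness and strict accretivity of $B_1, B_2$ on the ranges follow from the assumed ellipticity of $b$ and $B$ together with the spectral decomposition $\Hil = \nul(\Pi) \oplus^\perp \close{\ran(\Pi)}$. The one ingredient not present in the inhomogeneous situation of \cite{BMc} is the coercivity hypothesis $\norm{u} \lesssim \norm{\Pi u}$ on $\close{\ran(\Pi)} \cap \dom(\Pi)$, but this is precisely \lemref{Lem:Cor}, supplied by the Poincar\'e inequality \eqref{Def:P} available on the smooth compact $(\cM,\mg)$.

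With the hypotheses verified, I would invoke the main quadratic estimate theorem from \cite{AKMc} (in the manifold form of \cite{BMc}) to conclude that $\Pi_B$ admits a bounded holomorphic functional calculus on $\close{\ran(\Pi_B)}$. A direct block computation then gives
$$ \Pi_B^2 = \begin{pmatrix} -b\divv B\conn & 0 \\ 0 & -\conn b\divv B\end{pmatrix},$$
so $(-b\divv B\conn)$ is the upper-left entry of $\Pi_B^2$. Applying the functional calculus to $\sgn(\zeta) = \zeta/\sqrt{\zeta^2}$ and $\sqrt{\zeta^2}$ yields $\norm{\Pi_B u} \simeq \norm{|\Pi_B| u}$ on the range, and restricting this equivalence to the first coordinate produces $\dom\bigl(\sqrt{-b\divv B\conn}\bigr) = \dom(\conn) = \Sob{1,2}(\cM)$ with $\norm{\sqrt{-b\divv B\conn}\, u} \simeq \norm{\conn u}$. (On $\nul(\conn)$ both operators act trivially, so there is no obstruction to extending the identification from the range to all of $\Sob{1,2}$.) Note the sign convention: in the paper $\Div_A = -\divv_\mg A \conn$ is the positive operator whose square root is intended.

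For the Lipschitz perturbation estimate, the coefficient open set $\{(b,B) : \re b \geq \kappa_2,\ \re B \geq \kappa_1 \iden\}$ is preserved under small perturbations $(\tilde b, \tilde B)$ with $\norm{\tilde b}_\infty < \eta_1$, $\norm{\tilde B}_\infty < \eta_2$, and the bounded functional calculus constants are uniform on such perturbations. The contour integral formula
$$ \sqrt{\Pi_B^2}\, u = \frac{1}{2\pi\imath} \oint_\gamma \sqrt{\zeta^2}\, (\zeta\iden - \Pi_B)^{-1} u\, d\zeta $$
expresses $\sqrt{-b\divv B\conn}$ as a holomorphic function of $(b,B)$, and the difference of two resolvents is controlled by the second resolvent identity applied to the difference $\Pi_{B+\tilde B} - \Pi_B$, whose operator norm is $\lesssim (\norm{\tilde b}_\infty + \norm{\tilde B}_\infty) \norm{\conn u}$ when acting on $\Sob{1,2}$. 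This is the standard perturbation argument for holomorphic families of sectorial operators and delivers the stated Lipschitz bound with constants depending on $B_i$ and $\eta_i$ but not on any smoothness of $\tilde B$.

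The main obstacle is the verification of the quadratic estimate in the homogeneous setting, since the classical AKM reductions to Carleson measures on $\R^n$ and their manifold analogues in \cite{BMc} are written for inhomogeneous operators with lower-order terms. However, the absence of zero-order terms here only simplifies the argument (the operator $S$ used to absorb them in \cite{BMc} can be taken to be zero), and compactness of $\cM$ removes the global Sobolev/injectivity issues that drive much of the delicate analysis in \cite{BMc}; what remains is the dyadic stopping-time/$T(b)$ machinery which transfers verbatim using a fixed finite atlas and a smooth partition of unity. Everything downstream --- the Kato estimate and the Lipschitz perturbation bound --- is then standard functional calculus.
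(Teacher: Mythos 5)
Your high-level route is the same as the paper's: verify the Axelsson--Keith--McIntosh hypotheses (H1)--(H8) in the form listed in \cite{BMc}, invoke their Theorem~4.2 to get a bounded holomorphic functional calculus for $\Pi_B$, read off the Kato estimate from the block structure of $\Pi_B^2$, and obtain the Lipschitz bound from holomorphic dependence of the calculus on the coefficients. That much is correct, and the use of compactness plus the Poincar\'e inequality \eqref{Def:P} to supply the spectral gap is exactly the mechanism the paper relies on.

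There is, however, a genuine gap in your treatment of the coercivity hypothesis. You assert that the one missing ingredient relative to the inhomogeneous setting is the bound $\norm{u} \lesssim \norm{\Pi u}$ on $\ran(\Pi)\cap\dom(\Pi)$ and that this ``is precisely \lemref{Lem:Cor}.'' But the hypothesis (H8)-2 actually required is the stronger estimate $\norm{\conn u} + \norm{u} \lesssim \norm{\Pi u}$; the $\norm{\conn u}$ term is not optional, since the whole point of the Kato estimate is comparability with $\norm{\conn u}$, and the quadratic-estimate machinery needs this first-order coercivity. \lemref{Lem:Cor} alone gives only the $\norm{u}$ part. The paper supplies the missing $\norm{\conn u} \lesssim \norm{\Pi u}$ piece by a separate argument: writing $u=(u_1,u_2)$ with $u_1 = \divv v_2$ and $u_2 = \conn v_1$, the problematic term $\norm{\conn^2 v_1}$ is controlled via the Bochner--Weitzenb\"ock identity, $\norm{\conn^2 v_1}^2 \lesssim \norm{\Lap v_1}^2 + \norm{v_1}^2$ (using density of $\Ck[c]{\infty}$ in $\dom(\Lap)$ and $\Sob{2,2}$ on a compact manifold), after which the lower-order $\norm{v_1}$ term is absorbed by the Poincar\'e inequality since $v_1$ has mean zero. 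Without this step --- which is the only genuinely geometric input beyond Poincar\'e --- the hypothesis list is not verified and Theorem~4.2 of \cite{BMc} cannot be invoked. Your proposal never proves, or even states, the $\norm{\conn u}$ half of (H8)-2.

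A smaller imprecision: in the Lipschitz argument you write that $\Pi_{B+\tilde B}-\Pi_B$ has ``operator norm $\lesssim(\norm{\tilde b}_\infty+\norm{\tilde B}_\infty)\norm{\conn u}$.'' That difference is not a bounded operator (it involves $\adj\Gamma$), so this does not typecheck as an operator-norm statement. The correct way to package the perturbation is as multiplication operators $A_1, A_2$ satisfying the algebraic range/kernel conditions of Corollary~4.6 in \cite{BMc}, which is what the paper does; your resolvent-difference idea is the content of the proof of that corollary, but it needs to be carried out on the resolvent level with the accretivity/cancellation structure, not via a naive norm bound on $\Pi_{B+\tilde B}-\Pi_B$.

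Finally, your remark that the absence of lower-order terms ``only simplifies'' the quadratic-estimate verification inverts the actual difficulty: dropping the zeroth-order term removes the automatic resolvent decay near the origin, and it is precisely the coercivity (H8)-2, underpinned by Poincar\'e and Bochner--Weitzenb\"ock on the compact manifold, that restores it. That is what makes the homogeneous case a nontrivial extension rather than a specialisation.
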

\begin{proof}
Our goal is to check the Axelsson-Keith-McIntosh
hypotheses (H1)-(H8) as listed
in \cite{BMc} to invoke Theorem 4.2 and obtain 
a bounded functional calculus for $\Pi_B$.

To avoid unnecessary repetition by listing this framework,
we leave it to the reader to check \cite{BMc}.
However, for completeness of the proof,
we will remark on why the bulk of these
hypothesis are automatically true.

First, by virtue of the fact that we are on 
a smooth manifold with a smooth metric, 
we have that $\modulus{\Ric}  \lesssim 1$,
and $\inj(\cM, \mg) > \kappa > 0$.
Coupling this with the fact that $\Gamma$
is a first-order differential operator
makes their hypotheses (H1)-(H7) and (H8)-1
valid immediately.
The hypotheses (H1)-(H6) are valid
as a consequence of their Theorem 6.4 and
Corollary 6.5 in \cite{BMc}.
The proof of (H7) is contained in their
Theorem 6.2, as is the proof of
(H8)-1, which follows by bootstrapping
the \Poincare inequality \eqref{Def:P}
and coupling this with their Proposition 5.3.

It only remains to prove their (H8)-2: 
that there exists a $C > 0$
such that $\norm{\conn  u} + \norm{u} \leq C \norm{\Pi u}$,
whenever $u \in \ran(\Pi) \intersect \dom(\Pi)$.

Fix such a $u = (u_1, u_2)$
and note that $u_1 = \divv v_2$
for some $v_2 \in \dom(\divv)$
and $u_2 = \conn v_1$ for
some $v_1 \in \dom(\conn)$.
Then,
$$\norm{\conn u}^2 
	= \norm{\conn u_1}^2 + \norm{\conn u_2}^2
	= \norm{\conn\divv v_2}^2 + \norm {\conn^2 v_1}^2.$$
Also, 
$$\norm{\Pi v}^2 = \norm{\divv \conn v_1}^2 + \norm{\conn \divv v_2}^2.$$ 
Thus, it suffices to 
estimate the term $\norm{\conn^2 v_1}$
above from $\norm{\Lap v_1}$. 
By exploiting the
fact that $\Ck[c]{\infty}$
functions are dense in both $\dom(\Lap)$
and $\Sob{2,2}(\cM)$ on a compact
manifold, 
the Bochner-Weitzenb\"ock identity
yields
$\norm{\conn^2 v_1}^2 \lesssim 
	\norm{\Lap v_1 }^2 +  \norm{ v_1}^2$.
Now, $u_2 = \conn v_1 \in \ran(\conn)$
and we can assume that $u_2 \neq 0$.
Thus, $v_1 \not \in \nul(\conn)$
and hence, $\int_{\cM} v_1\ d\mu_\mgt = 0$.
Thus, by invoking the \Poincare inequality, 
we obtain that
$\norm{v_1} \leq C \norm{\conn v_1} = \norm{u_2}$.
On combining these estimates, we obtain 
that 
$\norm{\conn u} \lesssim \norm{\Pi u}$.
In Lemma \ref{Lem:Cor}, we have already
proven that $\norm{u} \lesssim \norm{\Pi u}$.

This allows us to invoke Theorem 4.2 in \cite{BMc}, 
which says that the operator $\Pi_B$
has a bounded functional calculus.
The first estimate in the conclusion is then immediate.

For the Lipschitz estimate, 
by the fact that that $\Pi_B$ has a bounded
functional calculus, we can apply
Corollary 4.6 in \cite{BMc}. This
result states that for multiplication
operators $A_i$ satisfying
satisfying
\begin{enumerate}[(i)]
\item $\norm{A_i}_\infty \leq \eta_i$,
\item $A_1 A_2 \ran(\Gamma), B_1 A_2 \ran(\Gamma),
	A_1 B_2 \ran(\Gamma) \subset \nul(\Gamma)$, and 
\item $A_2 A_1 \ran(\adj{\Gamma}), B_2 A_1 \ran(\adj{\Gamma}),
	A_2 B_1 \ran(\adj{\Gamma}) \subset \nul(\adj{\Gamma})$,
\end{enumerate}
we obtain 
that for an appropriately chosen $\mu < \pi/2$, 
and for all $f \in \Hol^\infty(S_{\mu}^o)$,
$$\norm{f(\Pi_B) - f(\Pi_{B+A})}
	\lesssim (\norm{A_1}_\infty + \norm{A_2}_\infty)
	\norm{f}_\infty.$$

Setting 
$$A_1 = \begin{pmatrix} \tilde{b} & 0 \\ 0 & 0 \end{pmatrix},
\quad\text{and}\quad
A_2 = \begin{pmatrix} 0 & 0 \\ 0 & \tilde{B} \end{pmatrix},$$
it is easy to see that these conditions are satisfied,
and by repeating the argument in 
Theorem 7.2 in \cite{BMc}
for our operator $\Pi_B$, 
we obtain the  Lipschitz estimate in the conclusion.
\end{proof}

\subsection{The main theorem}

Let us now return to the
proof of Theorem \ref{Thm:Main}.
Recall the operator 
$\Div_{x}u = -\divv A_x \conn u$, 
and that $\inprod{A_x u, u} \geq \kappa_x \norm{u}^2$, 
for $u \in \Lp{2}(\cotanb\cM)$.

A direct consequence of the 
Kato square root result
from our previous sub-section is
then the following.

\begin{corollary}
\label{Cor:Part}
Fix $x \in \cM$ and $u \in \Sob{1,2}(\cM)$. 
If 
$\norm{A_x - A_y} \leq \zeta < \kappa_x$, 
then for $u \in \Sob{1,2}(\cM)$, 
$$ 
\norm{\sqrt{\Div_{x}} u - \sqrt{\Div_{y}} u} \lesssim \norm{\A_x - \A_y}_\infty \norm{\conn u}.$$
The implicit constant depends on $\zeta$ and $A_x$.
\end{corollary}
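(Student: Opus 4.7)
The plan is to recognise this as a direct consequence of the Lipschitz perturbation estimate already packaged inside Theorem \ref{Thm:Kato}. In the notation of that theorem, I would set $b \equiv 1$ and $\tilde{b} \equiv 0$ so that the scalar factor is inert, take the base coefficient to be $B = A_x$ with ellipticity constant $\kappa_2 = \kappa_x$, and regard $A_y$ as a perturbation of $A_x$ via $\tilde{B} := A_y - A_x$. The hypothesis $\norm{A_x - A_y}_\infty \leq \zeta < \kappa_x$ is then precisely the smallness condition $\norm{\tilde{B}}_\infty < \eta_2 < \kappa_2$ needed, with $\eta_2 := \zeta$.

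Before invoking the theorem, I would briefly check that $A_y$ itself defines an admissible divergence-form operator in the sense of Theorem \ref{Thm:Kato}, which is immediate from the reverse triangle inequality: for every covector $u$,
$$\re \inprod{A_y u, u} \geq \inprod{A_x u, u} - \norm{A_x - A_y}_\infty \norm{u}^2 \geq (\kappa_x - \zeta) \norm{u}^2 > 0.$$
Thus $\sqrt{\Div_y}$ is defined, and by the Kato identification in Theorem \ref{Thm:Kato} it has domain $\Sob{1,2}(\cM)$, coinciding with $\dom(\sqrt{\Div_x})$. In particular the difference $\sqrt{\Div_x}\, u - \sqrt{\Div_y}\, u$ makes sense for every $u \in \Sob{1,2}(\cM)$.

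The Lipschitz estimate in Theorem \ref{Thm:Kato} then reads, under these identifications,
$$\norm{\sqrt{\Div_x}\, u - \sqrt{\Div_y}\, u} \lesssim (\norm{\tilde{b}}_\infty + \norm{\tilde{B}}_\infty)\norm{\conn u} = \norm{A_x - A_y}_\infty \norm{\conn u},$$
with implicit constant depending on $B = A_x$ (through $\kappa_x$ and $\norm{A_x}_\infty$) and on the threshold $\eta_2 = \zeta$. This is exactly the content of the corollary.

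Since the heavy lifting (verifying the hypotheses (H1)--(H8), the coercivity lemma, and the perturbation estimate derived from the bounded functional calculus) has already been carried out in Theorem \ref{Thm:Kato}, there is essentially no remaining obstacle. The only conceptual step is to match the symmetric real coefficients $A_x, A_y$ appearing here with the complex non-symmetric setup of Theorem \ref{Thm:Kato}; this is trivial because symmetric real elliptic coefficients are a special case of the coefficients allowed there.
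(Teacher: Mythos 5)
Your proposal is correct and follows exactly the route the paper intends: the corollary is presented in the paper as a direct consequence of Theorem \ref{Thm:Kato}, and you correctly instantiate it with $b \equiv 1$, $\tilde{b} \equiv 0$, $B = A_x$, and $\tilde{B} = A_y - A_x$, so that the hypothesis $\norm{A_x - A_y}_\infty \leq \zeta < \kappa_x$ places the perturbation inside the admissible threshold. Your explicit verification that $A_y$ remains elliptic with lower bound $\kappa_x - \zeta > 0$, so that $\sqrt{\Div_y}$ is well-defined with domain $\Sob{1,2}(\cM)$, is a reasonable step that the paper leaves implicit.
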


In turn, this implies the
following.

\begin{corollary}
\label{Cor:Diff}
Fix $x \in \cM$ and 
suppose that $\norm{A_x - A_y} \leq \zeta < \kappa_x$.
Then, 
$$ \norm{\Div_x^{-1}\eta_x - \Div_y^{-1}\eta_y} \lesssim \norm{\A_x - \A_y}_\infty \norm{\eta_x} + \norm{\eta_x - \eta_y},$$
whenever $\eta_x, \eta_y \in \Lp{2}(\cM)$ satisfies $\int_{\cM} \eta_x\ d\mu_\mg  = \int_{\cM} \eta_y\ d\mu_\mg = 0$.
The implicit constant depends on  $\zeta$, $\kappa_x$, and $A_x$.
\end{corollary}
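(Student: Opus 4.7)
The plan is to reduce everything to the square-root operators $S_z := \sqrt{\Div_z}$, since the homogeneous Kato theorem (Theorem \ref{Thm:Kato}) ensures $\dom(S_x) = \dom(S_y) = \Sob{1,2}(\cM)$, independently of the parameter. On $\close{\ran(\Div_z)}$, which by the \Poincare inequality is the fixed zero-mean subspace for every $z$, we have $\Div_z^{-1} = S_z^{-2}$, and the spectral gap $\spec(\Div_z^R) \subset [\kappa_z \lambda_1(\cM,\mg), \infty)$ yields uniform bounds on $S_z^{-1}$ and $S_z^{-2}$ for $y$ close enough to $x$. This detour through half-powers is essential: the naive resolvent identity $\Div_x^{-1} - \Div_y^{-1} = \Div_x^{-1}(\Div_y - \Div_x)\Div_y^{-1}$ is useless because $\Div_y - \Div_x = -\divv(A_y - A_x)\conn$ is second order, whereas passing to square roots distributes the differentiation symmetrically across both factors.

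First I would split
\[
\Div_x^{-1}\eta_x - \Div_y^{-1}\eta_y = S_x^{-2}(\eta_x - \eta_y) + (S_x^{-2} - S_y^{-2})\eta_y,
\]
so that the first term is immediately controlled by $\norm{S_x^{-2}}\,\norm{\eta_x - \eta_y} \lesssim \norm{\eta_x - \eta_y}$. For the second piece, the telescoping
\[
S_x^{-2} - S_y^{-2} = S_x^{-1}(S_x^{-1} - S_y^{-1}) + (S_x^{-1} - S_y^{-1})S_y^{-1}
\]
combined with the resolvent identity $(S_x^{-1} - S_y^{-1})w = S_x^{-1}(S_y - S_x)S_y^{-1}w$ — legitimate precisely because $S_y^{-1}w \in \Sob{1,2}(\cM) = \dom(S_x)$ by Kato — reduces the estimate to bounding $\norm{(S_y - S_x)v}$ for $v \in \Sob{1,2}(\cM)$.

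This is exactly the content of Corollary \ref{Cor:Part}, which gives $\norm{(S_y - S_x)v} \lesssim \norm{A_x - A_y}_\infty \norm{\conn v}$, and the Kato equivalence $\norm{\conn v} \simeq \norm{S_y v}$ upgrades this to $\norm{(S_x^{-1} - S_y^{-1})w} \lesssim \norm{A_x - A_y}_\infty \norm{w}$ upon applying the bounded $S_x^{-1}$. Feeding this back through the telescoping yields $\norm{(S_x^{-2} - S_y^{-2})\eta_y} \lesssim \norm{A_x - A_y}_\infty \norm{\eta_y}$. A final absorption step — write $\norm{\eta_y} \leq \norm{\eta_x} + \norm{\eta_x - \eta_y}$ and use the hypothesis $\norm{A_x - A_y}_\infty \leq \zeta$ so that the cross term $\norm{A_x - A_y}_\infty \norm{\eta_x - \eta_y}$ is dominated by $\zeta \norm{\eta_x - \eta_y}$ and merges into the first contribution — delivers the stated inequality.

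The main obstacle is conceptual rather than computational: without the homogeneous Kato theorem there is no fixed domain on which to compare $\sqrt{\Div_x}$ and $\sqrt{\Div_y}$, and the algebraic manipulations above would be meaningless. Uniformity of the implicit constants as $y$ varies near $x$ is a minor technical point, handled by the $\Lp{\infty}$-closeness of $A_y$ to $A_x$ (which controls $\kappa_y$ from below by $\kappa_x - \zeta > 0$) together with the intrinsic Poincaré constant $\lambda_1(\cM,\mg)$ of the smooth metric.
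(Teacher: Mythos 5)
Your argument is correct, and it rests on the same three pillars the paper uses: the homogeneous Kato theorem to identify $\dom(\sqrt{\Div_z}) = \Sob{1,2}(\cM)$ for every $z$; Corollary~\ref{Cor:Part} to control $\norm{(S_x - S_y)v}$ by $\norm{A_x - A_y}_\infty\norm{\conn v}$; and the lower ellipticity estimate $\kappa_y \geq \kappa_x - \zeta > 0$ together with the fixed \Poincare constant to keep all implicit constants uniform as $y$ varies. Where you differ from the paper is in the algebraic bookkeeping. You split off $S_x^{-2}(\eta_x-\eta_y)$ first and then telescope $S_x^{-2} - S_y^{-2} = S_x^{-1}(S_x^{-1}-S_y^{-1}) + (S_x^{-1}-S_y^{-1})S_y^{-1}$, inserting the second-resolvent-type identity $(S_x^{-1}-S_y^{-1})w = S_x^{-1}(S_y-S_x)S_y^{-1}w$ into each summand. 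The paper instead proves a one-step Lipschitz estimate $\norm{T_x^{-1}u - T_y^{-1}u} \lesssim \norm{A_x-A_y}_\infty\norm{u}$ on a common zero-mean input, extends it by triangle inequality to differing inputs $v_x, v_y$, and then applies this extended estimate twice, the second time with $v_z = T_z^{-1}\eta_z$. Both routes land on the same inequality; yours is arguably tidier. Two small remarks. First, where you appeal to the full Kato equivalence $\norm{\conn v} \simeq \norm{S_y v}$, only the elementary form bound $\norm{S_y v}^2 = J_y[v] \geq \kappa_y\norm{\conn v}^2$ is actually needed, and quoting it directly (as the paper does) makes the uniformity of the constant via $\kappa_y \geq \kappa_x - \zeta$ explicit at the point of use rather than deferred to a closing remark. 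Second, because your split places the operator difference against $\eta_y$, you need the extra absorption $\norm{\eta_y} \leq \norm{\eta_x} + \norm{\eta_x-\eta_y}$ combined with $\norm{A_x-A_y}_\infty \leq \zeta$ to recover the stated form; the paper's arrangement produces $\norm{\eta_x}$ immediately. Neither of these is an error.
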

\begin{proof}
First consider the operator 
$T_x = \sqrt{\Div_x}$, and fix $u \in \Lp{2}(\cM)$ such that
$\int_{\cM} u\ d\mu_\mg = 0$. We prove that
$\norm{T_x^{-1}u - T_y^{-1}u} \leq \norm{\A_x - \A_y}_\infty \norm{u}.$

Observe that $\dom(T_x) = \Sob{1,2}(\cM)$
and so $T_x^{-1}u = T_x^{-1}(T_y T_y^{-1})u =  (T_x^{-1}T_y) T_y^{-1}u$
since $T_y^{-1} u \in \Sob{1,2}(\cM)$.
Also, since $T_x^{-1}T_x = T_x T_x^{-1}$ on $\Sob{1,2}(\cM)$,
we have that $T_y^{-1} u = T_x^{-1} T_x L^{-1}_y u$. 
Thus,
\begin{multline*}
\norm{T_x^{-1} u - T_y^{-1} u}
	= \norm{T_x^{-1} T_y T_y^{-1} u - T_x^{-1} T_x T_y^{-1}u} 
	= \norm{T_x^{-1}(T_y - T_x)T_y^{-1} u} \\
	\lesssim \norm{(T_y - T_x)T_y^{-1}u}
	\lesssim \norm{\A_x - \A_y}_\infty \norm{\conn T_y^{-1} u},
\end{multline*} 
where the penultimate inequality 
follows from Corollary \ref{Cor:Part}.

On letting $J_x[u] = \inprod{\A_x \conn u, \conn u} \geq \kappa_x \norm{\conn u}^2$,
we note that, for $\norm{\conn u} \neq 0$, 
$$\kappa_x - \kappa_y \leq \frac{ J_x[u] - J_y[u]}{\norm{\conn u}^2}\leq \norm{\A_x - \A_y}_\infty \leq \zeta < \kappa_x.$$
This gives us that $\kappa_x - \zeta \geq \kappa_y$ and 
$\kappa_x - \zeta > 0$ by our hypothesis, and hence, 
$$(\kappa_x - \zeta) \norm{\conn u}^2 \leq \kappa_y \norm{\conn u}^2 \leq J_y[u] = \norm{T_y u}^2.$$
Thus, $\norm{\conn T_y^{-1} u} \leq (\kappa_x - \zeta)^{-1} \norm{u}$, and hence,
$$\norm{T_x^{-1} u - T_y^{-1} u}
	\lesssim \norm{\A_x - \A_y}_\infty \norm{u},$$
where the implicit constant depends on $\zeta$, $\kappa_x$ and $A_x$.

Next, let $v_x, v_y \in \Lp{2}(\cM)$ satisfy $\int_{\cM} v_x\ d\mu_\mg  = \int_{\cM} v_y\ d\mu_\mg = 0$
and note that
\begin{align*}
\norm{T_x^{-1}v_x - T_y^{-1}v_y} 
	&\leq \norm{T_x^{-1}v_x - T_y^{-1}v_x} + \norm{T_y^{-1}(v_x - v_y)}\\
	&\lesssim \norm{\A_x - \A_y}_\infty \norm{v_x} + \norm{(T_x^{-1} - T_y^{-1})(v_x - v_y)}
						+ \norm{T_x^{-1}(v_x - v_y)} \\
	&\lesssim \norm{\A_x - \A_y}_\infty \norm{v_x} + \norm{\A_x - \A_y})_\infty \norm{v_x - v_y}
				+ \norm{v_x - v_y} \\
	&\lesssim \norm{\A_x - \A_y}_\infty \norm{v_x} + \norm{v_x - v_y},
\end{align*}
where the constant depends on  $\zeta$, $\kappa_x$, and $A_x$.
Now, putting $v_x = \Div_x^{-\frac{1}{2}}\eta_x = T_x^{-1}\eta_x$, 
and similarly choosing $v_y$,
since we assume  $\int_{\cM} \eta_x\ d\mu_\mg = \int_{\cM} \eta_y\ d\mu_\mg = 0$,
the same is satisfied for $v_x$ and $v_y$.
Hence, 
we apply what we have just proved to 
obtain 
\begin{align*}
\norm{\Div_x^{-1} \eta_x - \Div_y^{-1}\eta_y}
	&\lesssim \norm{\A_x - \A_y}_\infty \norm{\Div_{x}^{-\frac{1}{2}}\eta_x}
		+ \norm{T_x^{-1}\eta_x - T_y^{-1} \eta_y} \\
	&\lesssim \norm{\A_x - \A_y}_\infty \norm{\eta_x} 
	+ \norm{\A_x - \A_y}_\infty \norm{\eta_x}
		+ \norm{\eta_x - \eta_y}	\\
	&\lesssim \norm{\A_x - \A_y}_\infty \norm{\eta_x} + \norm{\eta_x - \eta_y}.
\end{align*}
This proves the claim.
\end{proof}

With the aid of this,
the proof of Theorem \ref{Thm:Main} is
immediate. 

\begin{proof}[Proof of Theorem \ref{Thm:Main}]
Fix $\epsilon \in (0, \kappa_x)$
and by the assumption that $x \mapsto \eta_x$ is $\Lp{2}$-continuous
at $x$ and that $x \mapsto A_x$ is $\Lp{\infty}$-continuous
at $x$, we have a $\delta = \delta(x,\epsilon)$
such that
$$\norm{\eta_x - \eta_x} < \epsilon  
\quad \text{and}\quad
\norm{A_x - A_y}_\infty < \epsilon.$$
Thus, in invoking Corollary \ref{Cor:Diff}, we obtain 
$\norm{u_x - u_y} \lesssim \epsilon$
where the implicit constant only depends on $x$.
\end{proof}

\providecommand{\bysame}{\leavevmode\hbox to3em{\hrulefill}\thinspace}
\providecommand{\MR}{\relax\ifhmode\unskip\space\fi MR }
\providecommand{\MRhref}[2]{%
  \href{http://www.ams.org/mathscinet-getitem?mr=#1}{#2}
}
\providecommand{\href}[2]{#2}

\setlength{\parskip}{0mm}

\end{document}